\newcommand{\eps}{\varepsilon}
\newcommand{\cC}{\mathcal{C}}
\crefname{equation}{}{}
\crefname{proposition}{Proposition}{Propositions}
\crefname{enumi}{}{}
\newtheorem{theorem}{Theorem}[section]
\crefname{theorem}{Theorem}{Theorems}
\newtheorem{lemma}[theorem]{Lemma}
\newtheorem{proposition}[theorem]{Proposition}
\newtheorem{corollary}[theorem]{Corollary}
\newtheorem{conjecture}[theorem]{Conjecture}
\newtheorem{claim}[theorem]{Claim}
\theoremstyle{definition}
\newtheorem{definition}[theorem]{Definition}
\numberwithin{equation}{section}
\DeclareMathOperator{\blue}{blue}
\DeclareMathOperator{\red}{red}
\newenvironment{proofclaim}[1][Proof of Claim]{\begin{proof}[#1]}{\end{proof}}
\newcommand{\EMAIL}[1]{\textit{{E-mail}}: \texttt{\href{mailto:#1}{#1}}} 
\title{On $k$-uniform tight cycles: the Ramsey number for $C_{kn}^{(k)}$ and an approximate Lehel's conjecture}
\author{Vincent Pfenninger \thanks{Institute of Discrete Mathematics, Graz University of Technology, Austria, \EMAIL{pfenninger@math.tugraz.at}}}
\date{\today}
\begin{document}

\maketitle

\begin{abstract}
    A \emph{$k$-uniform tight cycle} is a $k$-graph with a cyclic ordering of its vertices such that its edges are precisely the sets of~$k$ consecutive vertices in that ordering.
    We show that, for each $k \geq 3$, the Ramsey number of the $k$-uniform tight cycle on $kn$ vertices is $(1+o(1))(k+1)n$. This is an extension to all uniformities of previous results for $k = 3$ by Haxell, {\L}uczak, Peng, R\"odl, Ruci\'nski, and Skokan and for $k = 4$ by Lo and the author and confirms a special case of a conjecture by the former set of authors.

    Lehel's conjecture, which was proved by Bessy and Thomass\'e, states that every red-blue edge-coloured complete graph contains a red cycle and a blue cycle that are vertex-disjoint and together cover all the vertices. We also prove an approximate version of this for $k$-uniform tight cycles. 
    We show that, for every $k \geq 3$, every red-blue edge-coloured complete $k$-graph on $n$ vertices contains a red tight cycle and a blue tight cycle that are vertex-disjoint and together cover $n - o(n)$ vertices.
\end{abstract}

\section{Introduction}

For a $k$-graph ($k$-uniform hypergraph) $H$, we denote by $r(H)$ the \emph{Ramsey number} of~$H$, that is, the smallest integer $N$ such that every $2$-edge-colouring of the complete $k$-graph on $N$ vertices contains a monochromatic copy of $H$.\footnote{For any $2$-edge-colouring we will assume that the colours are red and blue.}

The Ramsey number for cycles in graphs was determined in \cite{Bondy1973,Faudree1974,Rosta1973} to be, for $n \geq 5$,
\begin{align*}
    r(C_n) = 
    \begin{cases}
        \frac{3}{2}n -1, \quad\text{ if $n \in 2 \mathbb{N}$}, \\
        2n-1,  \quad\text{ if $n \in 2 \mathbb{N} +1$}. 
    \end{cases}
\end{align*}
There are several possible extensions of cycles to $k$-graphs. Here we will focus on \emph{tight cycles}. For the Ramsey number of \emph{loose cycles}, we refer the reader to \cite{Shahsiah2018} and the references therein.
We denote by $C_n^{(k)}$ the \emph{$k$-uniform tight cycle on $n$ vertices}, that is the $k$-graph with vertex set $V(C_n^{(k)}) = \{v_1, \dots, v_n\}$ and edge set $E(C_n^{(k)}) = \{v_i \dots v_{i+k-1} \colon i \in [n]\}$ where the indices are taken modulo $n$. For $k = 3$, Haxell, {\L}uczak, Peng, R\"odl, Ruci\'nski, and Skokan \cite{Haxell2007, Haxell2009} determined $r(C_n^{(3)})$ asymptotically.\footnote{All asymptotics in this paper are considered as $n \rightarrow \infty$ (and all other parameters are fixed).}
\begin{theorem}[{Haxell, {\L}uczak, Peng, R\"odl, Ruci\'nski, and Skokan \cite{Haxell2007, Haxell2009}}]
    \begin{align*}
        r(C_{3n+i}^{(3)}) =
        \begin{cases}
            (1+o(1))4n, \quad\text{ if $i = 0$}, \\
            (1+o(1))6n,  \quad\text{ if $i \in \{1,2\}$}. 
        \end{cases}
    \end{align*}
\end{theorem}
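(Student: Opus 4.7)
The plan is to attack the upper and lower bounds separately, using the $3$-uniform hypergraph regularity method for the former and explicit constructions for the latter. For the upper bound, set $N = (1+\eps)\alpha n$ with $\alpha = 4$ when $i = 0$ and $\alpha = 6$ when $i \in \{1, 2\}$, and consider an arbitrary $2$-edge-colouring of $K_N^{(3)}$. Applying the $3$-uniform regularity lemma yields a vertex partition $V_1, \ldots, V_t$ such that almost every triple of parts forms an $\eps$-regular tripartite $3$-graph in both colour classes. Build a reduced $3$-graph $R$ on $[t]$ by colouring each triple $\{i, j, k\}$ red and/or blue according to whether the red (respectively blue) density of the corresponding triad exceeds some threshold; every triple receives at least one colour. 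The key reduction is that it suffices to find in $R$ a \emph{monochromatic tight-connected matching} $M$ of size $|M| \geq (1+o(1)) n/m$, where $m = N/t$ is the cluster size: tight-connectivity allows us to thread tight paths, each covering $\sim 3m$ vertices inside the three clusters of a triple $e \in M$, into a single closed tight cycle of length $\sim 3m \cdot |M|$, via the standard embedding and connecting lemmas for tight paths in regular triads. For $i = 0$ this directly yields a tight cycle of length a multiple of~$3$; for $i \in \{1, 2\}$ one must additionally tune the embedding to hit the correct residue class modulo~$3$, which requires flexibility in both the triad embedding and the connecting paths.

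The combinatorial core is therefore the lemma: every $2$-edge-coloured $K_t^{(3)}$ contains a monochromatic tight-connected matching covering at least $(3/\alpha)\, t\, (1 - o(1))$ vertices. I would prove this by analysing the tight-component structures of the two colours: every triple lies in a red tight component and in a blue tight component, giving an auxiliary structure on triples that should be amenable to a fractional matching or K\"onig-type duality argument, together with a case analysis on the sizes and interactions of the largest components. For $i \in \{1, 2\}$ one additionally needs the ambient tight component to support closed tight walks in the required residue class modulo~$3$; this should come for free once the component is dense enough, but it is the extra obstruction that pushes the constant from $4$ to $6$. For the matching lower bound, I would exhibit explicit $2$-edge-colourings: for $i = 0$, partition roughly $4n - O(1)$ vertices into four classes and colour each triple based on its intersection pattern so that every monochromatic tight component is confined within at most three classes, capping its capacity below $3n$; for $i \in \{1, 2\}$, a $6$-class analogue further exploits the divisibility obstruction coming from $3 \nmid (3n + i)$.

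The hardest step, I expect, is the combinatorial lemma on monochromatic tight-connected matchings in the reduced $3$-graph, and in particular pinning down the sharp constants $3/4$ and $1/2$. The regularity application and the path-to-cycle lifting are by now standard, and the lower-bound constructions should be fairly transparent once the $3$-partite-like structure of tight cycles is understood. Fine-tuning the lift to produce a cycle of length exactly $3n + i$ for $i \in \{1, 2\}$ will also require care, since the natural embedding of a tight-connected matching yields a cycle length divisible by~$3$; this is typically overcome by reserving a small flexibility pool of vertices inside each cluster and adjusting the tight-path embeddings locally.
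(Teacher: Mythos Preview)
The paper does not prove this theorem. It is quoted as a known result of Haxell, {\L}uczak, Peng, R\"odl, Ruci\'nski, and Skokan, with a citation to \cite{Haxell2007, Haxell2009}, and no argument is given; the paper then states the general Conjecture~\ref{HLPRRS_conjecture} and proceeds to its own contributions for general~$k$ with $i=0$. There is therefore nothing here to compare your proposal against.

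Two remarks nonetheless. Your upper-bound outline --- hypergraph regularity, a reduced $3$-graph, a monochromatic tightly connected (fractional) matching, and a lift to a tight cycle --- is exactly the framework used both in the cited papers and, for general $k$, in the present paper; the paper's Corollary~\ref{cor:matching_to_cycle} is precisely this reduction. Your lower-bound constructions, however, do not match the one the paper records immediately after Conjecture~\ref{HLPRRS_conjecture}: that construction uses a \emph{two}-part partition (with $|X|=\frac{k}{d}n-1$ and $|Y|=kn-1$, colouring an edge red iff it meets $X$ in an even number of vertices), not four or six parts. Specialised to $k=3$ this gives $|X|=n-1$, $|Y|=3n-1$ for $i=0$ and $|X|=|Y|=3n-1$ for $i\in\{1,2\}$. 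Your four-class idea, as stated, is not obviously sound: confining a monochromatic tight component to three of four roughly equal classes of size about $n$ leaves it with about $3n$ vertices, which is the threshold you need to be strictly below, so the argument would need more than you have sketched.
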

The same authors also made a conjecture for the asymptotics of $r(C_n^{(k)})$ for all $k \geq 3$.
\begin{conjecture}[{Haxell, {\L}uczak, Peng, R\"odl, Ruci\'nski, and Skokan \cite{Haxell2007, Haxell2009}}] \label{HLPRRS_conjecture}
    For $k \geq 3$, $0 \leq i \leq k-1$, and $d \coloneqq \gcd(k,i)$, we have
    \[
        r(C_{kn+i}^{(k)}) = (1+o(1))\frac{d+1}{d}kn.
    \]
\end{conjecture}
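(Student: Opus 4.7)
The plan is to focus on the case $i = 0$, $r(C_{kn}^{(k)}) = (1+o(1))(k+1)n$, which is what the paper's title and abstract single out as the main result. For the lower bound, partition $V = A \sqcup B$ with $|A| = kn-1$ and $|B| = n-1$, colour all edges contained in $A$ blue, and all edges meeting $B$ red. No blue $C_{kn}^{(k)}$ fits inside $A$ since $|A| < kn$; a red $C_{kn}^{(k)}$ would require its $B$-vertices to cover all $kn$ of its tight edges, but each vertex of a tight cycle lies in only $k$ consecutive tight edges, giving a cover of size at most $k|B| = k(n-1) < kn$. This yields $r(C_{kn}^{(k)}) \geq (k+1)n - 1$, matching the asymptotic target.

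For the upper bound I would follow the regularity-based framework used for $k=3$ by Haxell, \L uczak, Peng, R\"odl, Ruci\'nski, and Skokan and extended to $k=4$ by Lo and the author. Apply a hypergraph regularity lemma to the $2$-edge-coloured $K_N^{(k)}$ on $N = (1+o(1))(k+1)n$ vertices, producing a reduced $k$-graph $R$ on a bounded number $t$ of clusters of common size $m \approx N/t$, each edge of $R$ receiving the majority colour of the corresponding regular $k$-partite block. A now-standard embedding lemma converts any monochromatic tightly connected matching $M$ in $R$ into a monochromatic tight cycle of length roughly $m \cdot |V(M)|$ in the original $k$-graph, provided $M$ lies in a single tight component. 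Hence it suffices to show that every $2$-edge-coloured $K_t^{(k)}$ contains a monochromatic tightly connected matching covering at least $\frac{k}{k+1}t - o(t)$ vertices. The residual gap to exactly $kn$ vertices is closed by short tight-path augmentations within the component, using the divisibility $k \mid kn$.

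The crux is therefore a purely combinatorial statement about the reduced graph. I would attempt a stability argument: assuming both colours fail to contain the required matching, examine the auxiliary graph on $(k-1)$-subsets that encodes tight connectivity (two tight edges lie in the same tight component iff their $(k-1)$-shadows are connected in this auxiliary graph). The goal is to deduce that failure forces $V(R)$ to split, up to an $o(t)$ error, as $A \sqcup B$ with $|A|/t \to k/(k+1)$, $|B|/t \to 1/(k+1)$, and with the two colours arranged as in the lower-bound construction above. But such a structured colouring itself contains a monochromatic tightly connected matching of size $\approx t/(k+1)$ inside $A$, contradicting the assumption.

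The main obstacle will be carrying out this stability analysis uniformly in $k$. Tight connectivity becomes more rigid as $k$ increases (consecutive tight edges share $k-1$ vertices, so the auxiliary $(k-1)$-graph is itself dense), and the classification of near-extremal $2$-colourings of $K_t^{(k)}$ avoiding a large monochromatic tight component grows progressively more delicate. A clean induction on $k$ does not look available, so I expect one needs a direct extremal argument on the auxiliary $(k-1)$-graph; this is likely the technical heart of the paper. Extending the conclusion to $i \neq 0$ would further require controlling the length of the embedded cycle modulo $k$ --- where $d = \gcd(k,i)$ enters via a divisibility refinement of the embedding lemma --- which I would treat as a separate, harder problem beyond the present plan.
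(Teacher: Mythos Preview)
Your framework is right at the outer layer: reduce via a hypergraph regularity/connected-matching method to showing that every $2$-edge-coloured almost-complete $k$-graph on $t$ vertices has a monochromatic tightly connected (fractional) matching of weight at least $\frac{t}{k+1} - o(t)$. Your lower-bound construction is a valid variant of the one the paper cites.

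Where you diverge from the paper is in how you attack that combinatorial core. You propose a stability argument: assume failure in both colours, deduce that the colouring is $o(t)$-close to the extremal partition, then observe that the extremal colouring itself succeeds. You acknowledge that the implication ``failure $\Rightarrow$ near-extremal'' is the missing technical heart, and speculate that it is what the paper does. It is not. No stability analysis appears anywhere in the paper, and I do not see a clean route to one uniform in $k$: tight components in $2$-coloured $K_t^{(k)}$ can interlock in ways that are not obviously captured by a single bipartition, so classifying near-extremal colourings looks genuinely hard.

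The paper instead runs an \emph{iterative augmentation}. It first finds a monochromatic tight component containing a matching of size $c_k t$ for some small $c_k>0$ (via Kruskal--Katona on the shadow). Then, given a matching $M$ in a red tight component $R$ with $|M| < (1-\eta)t/(k+1)$, it shows that either (i) $R$ contains a strictly larger $1/r$-fractional matching, or (ii) there is a blue tight component $B$ containing a matching $M'$ with $|M'| \approx |M|$ and a very specific overlap pattern: each $e'\in M'$ meets a unique $e\in M$ in $k-1$ vertices with $H[e\cup e']\cong K_{k+1}^{(k)}$. From situation (ii), a second lemma shows that one of $R$, $B$ admits a strictly larger fractional matching. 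Iterating (and passing to blow-ups to convert $1/r$-fractional matchings back to integral ones) drives the matching weight up to $(1-o(1))t/(k+1)$.

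The engine behind both augmentation lemmas is a structural fact from earlier work of Lo and the author (\cref{lem:blue_tight_walk} in the paper): if a closed tight pseudo-walk visits two edges in distinct blue tight components, then each of the two arcs between them must contain a red edge, and moreover those two red edges lie in the \emph{same} red tight component. This lets one manufacture, on demand, extra edges in a prescribed monochromatic tight component along carefully chosen short pseudo-walks --- which is exactly what is needed to gain the extra $1/k$ of fractional weight per step. None of this structure is visible from a stability viewpoint, and it is the idea your plan is missing.
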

The lower bound for \cref{HLPRRS_conjecture} is given by the following construction. Let $N = \frac{d+1}{d}kn -2$ and colour the edges of $K_N^{(k)}$ as follows.\footnote{We denote by $K_N^{(k)}$ the complete $k$-graph on $N$ vertices.} Partition the vertices into two sets $X$ and $Y$ with $|X| = \frac{k}{d}n -1$ and $|Y| = kn -1$. Then colour all edges which intersect $X$ in an even number of vertices red and all other edges blue. For this colouring, there is no monochromatic copy of $C_{kn+i}^{(k)}$ (see \cite[Proposition 7.2]{Lo2025}).

Lo and the author \cite{Lo2025} proved that $r(C_{4n}^{(4)}) = (1+o(1))5n$, confirming \cref{HLPRRS_conjecture} in the case where $k = 4$ and $i = 0$.
In this paper, we prove \cref{HLPRRS_conjecture} in the case where $i = 0$ for all $k \geq 3$.
\begin{theorem} \label{thm:k_unif_ramsey}
    For $k \geq 3$ and $\eps >0$, there exists a positive integer $n_0 = n_0(k, \eps)$ such that if $n \geq n_0$, then $r(C_{kn}^{(k)}) \leq (k+1 + \eps)n$.
\end{theorem}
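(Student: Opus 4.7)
The plan is to follow the hypergraph regularity scheme used for $k=3$ in \cite{Haxell2007,Haxell2009} and for $k=4$ in \cite{Lo2021}. Set $N=(k+1+\eps)n$ and consider an arbitrary red-blue colouring of $K_N^{(k)}$. First apply a suitably strong hypergraph regularity lemma (one that supports tight-path embeddings, for instance a regular slice) to obtain an equitable partition $V_1,\dots,V_t$ with $t=t(k,\eps)$ bounded such that all but a tiny fraction of $k$-tuples of parts are regular with respect to both colour classes. Encode this data in two auxiliary $k$-graphs $R_\red, R_\blue$ on vertex set $[t]$: put $\{i_1,\dots,i_k\}\in R_c$ whenever the corresponding $k$-tuple is regular and has density at least some fixed $d>0$ in colour $c$. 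Their union then covers all but a $o(1)$-fraction of $\binom{[t]}{k}$.

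The core combinatorial step is a structural Ramsey claim at the reduced level: one of $R_\red,R_\blue$ contains a \emph{tightly connected} sub-$k$-graph $T$ (any two edges of $T$ linked by a sequence of edges each sharing $k-1$ vertices with the next) admitting a matching of size at least $(1-o(1))\tfrac{t}{k+1}$. Granted this, the monochromatic $C_{kn}^{(k)}$ is assembled in three standard stages: (i) inside each matching edge $\{V_{i_1},\dots,V_{i_k}\}$ of $T$, regularity yields a long monochromatic tight path occupying most of those $k$ clusters; (ii) the tight connectedness of $T$ allows consecutive matching paths to be linked by short tight paths obtained from regular dense $k$-tuples across overlapping clusters; (iii) a final length adjustment within each long path produces a tight cycle of length exactly $kn$, which is feasible because $kN/(k+1) = kn + \tfrac{k\eps}{k+1}\,n$ provides linear slack and one can prune a long tight path by one vertex at a time to meet any target length.

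The one non-routine step is the structural Ramsey claim. I would attack it through the tight-component decomposition of each colour on $[t]$. If some colour has a tight component covering $(1-o(1))t$ vertices, a near-perfect matching inside it already has size $(1-o(1))t/k$, which beats $t/(k+1)$ by a comfortable margin. Otherwise, both colour component partitions are non-trivial, and one must show that the way these partitions refine one another forces a near-extremal bipartition $[t]=X\cup Y$ with $|X|\approx t/(k+1)$ and $|Y|\approx kt/(k+1)$, exactly mirroring the lower-bound construction; the colour whose tight component spans $Y$ then supplies the desired matching of size $t/(k+1)$. Carrying out this case analysis uniformly in $k$, rather than in the ad-hoc manner of \cite{Haxell2007,Haxell2009,Lo2021}, is the principal difficulty. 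It should reduce to tracking how the boundary of a tight component intersects $(k-1)$-shadows, exploiting that consecutive edges of a tight walk differ in only one vertex, so that parity-type invariants of the extremal construction are the only obstructions to tight connectivity.
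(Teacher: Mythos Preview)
Your high-level scheme --- regularity plus a connected-matching reduction --- matches the paper, which packages it as \cref{cor:matching_to_cycle}; the substance is then proving that every $2$-coloured $(1-\eps,\eps)$-dense $k$-graph on $t$ vertices has a monochromatic tight component carrying a fractional matching of weight $(1-o(1))t/(k+1)$ (this is \cref{lem:frac_matchings_exist}). Your attack on that step, however, has a genuine gap. Your Case~1 is false as stated: a tight component can cover all vertices yet be far too sparse for a large matching. Fix a $(k-1)$-set $S$ and colour an edge red iff it contains $S$; the red tight component spans every vertex but has matching number~$1$. Vertex coverage is simply the wrong invariant here --- you need edge density inside the component, and nothing in your dichotomy supplies it. Your Case~2 is only a hope: you assert that non-spanning components in both colours ``force a near-extremal bipartition'', but give no mechanism, and you yourself flag this as the principal difficulty without resolving it. For general $k$ no such stability or classification result is available to lean on, and ``parity-type invariants of the extremal construction'' is not a proof.

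The paper proceeds entirely differently and never attempts a global structure theorem. It starts from a linear-size matching in \emph{some} monochromatic tight component (via Kruskal--Katona, \cref{lem:large_MTC}) and then boosts iteratively: given a matching $M$ in a red component $R$ with $|M|<t/(k+1)$, either one finds a $1/k!$-fractional matching in $R$ of weight $|M|+\gamma t$, or one finds a blue component $B$ and a matching $M'\subseteq B$ with $|M'|\approx|M|$ in which each edge meets a unique edge of $M$ in $k-1$ vertices (\cref{lem:matching_inc_1}); from this configuration one then enlarges one of $M,M'$ (\cref{lem:matching_inc_2}). The engine behind both lemmas is \cref{lem:blue_tight_walk}: any closed tight pseudo-walk that visits two distinct blue components must contain, on each of the two arcs between them, a red edge of a common red component. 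This is what lets one certify that a freshly constructed edge lies in the \emph{prescribed} tight component, and it is the idea missing from your plan. Fractional gains are converted back to integral matchings by passing to bounded blow-ups (\cref{prop:matching_to_fractional,prop:H_*_is_dense}), so the iteration terminates after boundedly many rounds.
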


We remark that, as a straightforward corollary from this, we also obtain the asymptotics of the Ramsey number for the $k$-uniform tight path on $n$ vertices. We define the \emph{$k$-uniform tight path} $P_n^{(k)}$ as the $k$-graph with vertex set $\{v_1, \dots, v_{n}\}$ and edge set $\{v_i \dots v_{i+k-1} \colon i \in [n-k+1]\}$.
\begin{corollary}
    For $k \geq 3$, we have $r(P_n^{(k)}) = (1+o(1))\frac{k+1}{k}n$.
\end{corollary}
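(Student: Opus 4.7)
The plan is to derive this corollary directly from Theorem~\ref{thm:k_unif_ramsey} for the upper bound, and to exhibit a matching lower-bound colouring in the spirit of the construction described after Conjecture~\ref{HLPRRS_conjecture}.

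For the upper bound, I would exploit the straightforward embedding $P_n^{(k)} \subseteq C_{km}^{(k)}$, valid whenever $km \geq n$. Taking $m = \lceil n/k \rceil$ and applying Theorem~\ref{thm:k_unif_ramsey} with an auxiliary error parameter $\varepsilon'$ chosen sufficiently small in terms of $\varepsilon$ and $k$, we get $r(P_n^{(k)}) \leq r(C_{km}^{(k)}) \leq (k+1+\varepsilon')m \leq (1+\varepsilon)\frac{k+1}{k}n$ for all sufficiently large $n$.

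For the lower bound, I would adapt the parity construction from the paragraph following Conjecture~\ref{HLPRRS_conjecture}. Partition a set of $N = \lfloor n/k \rfloor + n - 2$ vertices into $X$ and $Y$ with $|X| = \lfloor n/k \rfloor - 1$ and $|Y| = n-1$, and colour a $k$-edge red if it meets $X$ in an even number of vertices and blue otherwise. Suppose for contradiction that a monochromatic tight path on vertices $v_1, \dots, v_n$ exists. Because consecutive edges of the path differ exactly in $\{v_i, v_{i+k}\}$, monochromaticity forces $v_i$ and $v_{i+k}$ to lie in the same part of the partition for every admissible $i$. Hence each of the $k$ residue classes of $\{1, \dots, n\}$ modulo $k$ is contained entirely in $X$ or entirely in $Y$, and each such class has at least $\lfloor n/k \rfloor$ vertices.

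Letting $S \subseteq \{1,\dots,k\}$ denote the set of residues whose class lies in $X$, the common parity of $|e_i \cap X|$ along the path equals the parity of $|S|$. A red tight path forces $|S|$ even: the case $|S| = 0$ needs $|Y| \geq n$, contradicting $|Y| = n-1$, while $|S| \geq 2$ needs $|X| \geq 2\lfloor n/k \rfloor$, contradicting $|X| = \lfloor n/k \rfloor - 1$. A blue tight path forces $|S|$ odd and hence $|S| \geq 1$, again forcing $|X| \geq \lfloor n/k \rfloor$, a contradiction. This yields $r(P_n^{(k)}) \geq N+1 = (1-o(1))\frac{k+1}{k}n$, matching the upper bound. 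No part of the argument is difficult; the only care needed is the integer arithmetic comparing $|S|\lfloor n/k \rfloor$ to $|X|$ for $|S| \in \{1, 2\}$, which is routine.
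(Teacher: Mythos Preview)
Your proposal is correct and follows essentially the same approach as the paper: the upper bound via the containment $P_n^{(k)} \subseteq C_{k\lceil n/k\rceil}^{(k)}$ and Theorem~\ref{thm:k_unif_ramsey} is exactly what the paper does, and your lower bound is the explicit verification of the parity construction that the paper simply cites as ``the same extremal example as the one for Conjecture~\ref{HLPRRS_conjecture} with $i=0$''. Your write-up supplies the details the paper omits, but the method is identical.
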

Indeed, the upper bound follows immediately from \cref{thm:k_unif_ramsey}, since $P_n^{(k)}$ is a sub-$k$-graph of the $k$-uniform tight cycle on $k\lceil \frac{n}{k} \rceil$ vertices. The lower bound is given by the same extremal example as the one for \cref{HLPRRS_conjecture} with $i = 0$.

Essentially the same proof as the one for \cref{thm:k_unif_ramsey} also gives an approximate Lehel's conjecture for $k$-uniform tight cycles. Lehel conjectured that the vertices of every $2$-edge-coloured complete graph can be partitioned into a red cycle and a blue cycle (where any set of at most $2$ vertices is also considered a cycle). This was proved for large $n$ by Łuczak, Rödl, and Szemer{\'e}di~\cite{Luczak1998}. The bound on $n$ was later improved by Allen~\cite{Allen2008}. Subsequently, Bessy and Thomass\'e~\cite{Bessy2010} gave a proof for all $n$. The analogous problem for tight cycles in hypergraphs was first considered by Bustamante, H\`an, and Stein \cite{Bustamante2017} who showed that an approximate Lehel's conjecture holds for $3$-uniform tight cycles.
\begin{theorem}[{Bustamante, H\`an, and Stein \cite{Bustamante2017}}]
    Every $2$-edge-coloured complete $3$-graph contains a red tight cycle and a blue tight cycle that are vertex-disjoint and together cover $n - o(n)$ vertices.
\end{theorem}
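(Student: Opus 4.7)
The plan is to use the $3$-uniform hypergraph regularity method combined with a structural partition of the reduced graph. First, I would apply a (weak) regularity lemma for $3$-graphs to the red $3$-graph (and simultaneously the blue one), producing an equipartition $V_1, \dots, V_m$ of $V(K_n^{(3)})$ such that for almost all ordered triples $(V_i, V_j, V_k)$, both the red and blue induced sub-$3$-graphs are $\eps$-regular. I colour each edge $\{i,j,k\}$ of the reduced complete $3$-graph $K_m^{(3)}$ by the colour whose density on $(V_i, V_j, V_k)$ exceeds $1/2$, so that the majority colour appears with density bounded away from zero.

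Next, I would prove a combinatorial statement on the reduced graph: any $2$-edge-coloured $K_m^{(3)}$ admits a partition $[m] = A \sqcup B$ such that the red $3$-graph restricted to $A$ contains a tight Hamilton cycle (or a near-spanning tight cycle) on $A$, and the blue $3$-graph restricted to $B$ contains one on $B$, up to discarding $o(m)$ clusters. The natural notion of connectivity here is \emph{tight connectivity}: two edges are tight-equivalent if they are joined by a sequence of same-colour edges in which consecutive edges share two vertices. I would then locate the largest monochromatic tight component in each colour and argue, by a swap/augmentation argument on boundary clusters, that after exchanging $o(m)$ clusters one obtains the desired partition. In the extremal direction (close to the construction with $|X| \approx 2n$ and $|Y|\approx n$ from the Ramsey lower bound), the required structure can be exhibited directly; otherwise one has enough slack for the swapping.

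Given the partition, I would invoke a tight-cycle embedding (or blow-up) lemma: a tight cycle through $t$ clusters in the reduced graph, each regular triple having density bounded below, can be extended to a tight cycle covering all but $o(n/m)$ vertices of each involved cluster. Applied separately to the red structure on $\bigcup_{i \in A} V_i$ and the blue structure on $\bigcup_{i \in B} V_i$, this yields a red tight cycle and a vertex-disjoint blue tight cycle covering $n - o(n)$ vertices in total.

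The main obstacle is the reduced-graph structural step. Unlike for graphs, tight connectivity in a $3$-graph corresponds to connectivity in the link graph of pairs, so a monochromatic tight component may fail to cover a vertex that sees many monochromatic edges; this rules out naive adaptations of the Bessy--Thomass\'e argument. Handling this requires showing that pairs which are red-tight-connected form a dense, nearly-spanning substructure after discarding a sublinear set of exceptional vertices, and that the complement inherits an analogous blue substructure — essentially an approximate two-colouring of the line-graph of pairs by ``colour of the dominant tight component''.
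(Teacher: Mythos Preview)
This theorem is quoted in the paper as a prior result of Bustamante, H\`an, and Stein; the paper does not give a separate proof of it, but \cref{thm:k_unif_Lehel} contains it as the special case $k=3$. So the relevant comparison is with the paper's proof of \cref{thm:k_unif_Lehel}.

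Your overall framework --- regularity plus a structural statement in the reduced hypergraph plus an embedding step --- matches the paper's use of {\L}uczak's connected matching method (\cref{cor:matching_to_cycle}). The substantive difference is in the structural statement itself. You aim to \emph{partition} the cluster set into $A\sqcup B$ so that red is tightly connected and near-spanning on $A$ and blue on $B$. The paper does not attempt this. Instead, it finds a fractional matching of weight $(1-o(1))m/k$ in the reduced $k$-graph whose support lies in $R\cup B$ for a single red tight component $R$ and a single blue tight component $B$ (\cref{lem:frac_matchings_exist}). No cluster-level partition is needed: a given cluster may be used by both colours. The weight is obtained by an iterative argument (\cref{lem:matching_inc_1,lem:matching_inc_2}, run inside successive blow-ups) that repeatedly either enlarges a matching in $R$ or produces a companion matching in a fixed $B$, and whose engine is the structural \cref{lem:blue_tight_walk}: if a tight pseudo-walk through $R$ joins two blue edges in different blue components, then every tight pseudo-walk between those blue edges contains an edge of $R$. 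This is what lets the paper manufacture edges of $R$ or $B$ wherever they are needed.

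Your proposal has a genuine gap at exactly the point you flag as the main obstacle. The claim that one can partition $[m]=A\sqcup B$ with near-spanning monochromatic tight cycles on each side is essentially the theorem restated for the reduced graph, and your sketch (``swap/augmentation on boundary clusters'', ``extremal direction handled directly'') does not supply a mechanism. In particular, the assertion that red-tight-connected pairs form a dense nearly-spanning substructure after deleting $o(m)$ vertices, with the complement inheriting an analogous blue substructure, is not justified: a red tight component can sit on a linear-sized vertex set while its complement supports several blue tight components rather than one, so there is no canonical ``blue side'' to hand the remaining clusters to. The paper sidesteps this entirely by never asking for a vertex partition and by allowing edges of both colours on the same $(k{+}1)$-tuple of clusters (weight $1/k$ on each of the $k{+}1$ edges of a copy of $K_{k+1}^{(k)}$ straddling $R$ and $B$); this flexibility is precisely what makes weight $(1-o(1))m/k$ attainable.
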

This was improved upon by Garbe, Mycroft, Lang, Lo, and Sanhueza-Matamala~\cite{Garbe2024}. 

\begin{theorem}[{Garbe, Mycroft, Lang, Lo, and Sanhueza-Matamala~\cite{Garbe2024}}] \label{thm:Garbe}
    There exists $n_0 \in \mathbb{N}$ such that for every $n \geq n_0$ and every $2$-edge-coloured complete $3$-graph on $n$ vertices, we have the following. 
    \begin{enumerate}[label= \upshape{(\roman*)}]
        \item There are a red tight cycle and a blue tight cycle that are vertex-disjoint and together cover at least $n - 2$ vertices. 
        \item There are two monochromatic tight cycles (possibly of the same colour) that are vertex-disjoint and together cover all the vertices. \label{GMLLSM_ii}
    \end{enumerate}
\end{theorem}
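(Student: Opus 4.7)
The plan is to combine the $3$-uniform hypergraph regularity method with an absorbing-path technique, using the Bustamante--H\`an--Stein approximate theorem as the starting point. That result already supplies vertex-disjoint monochromatic tight cycles of opposite colours covering all but $o(n)$ vertices, so the remaining task is to upgrade the $o(n)$ leftover to at most a constant (for part (i)) or to zero while allowing the two cycles to share a colour (for part (ii)). First I would apply a regular-slice/regularity lemma for $3$-graphs to obtain a reduced $3$-graph $R$ on clusters $V_1, \dots, V_t$, with each edge of $R$ coloured by the majority colour of its associated regular triple, and (re)prove the cluster-level Lehel statement inside $R$: a partition of nearly all clusters into a red tight-connected piece $X_r$ and a blue tight-connected piece $X_b$ each carrying an almost-spanning tight path.

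The heart of the argument is an \emph{absorbing path} reserved inside each of the two pieces: a tight path $A$ with the property that, for any sufficiently small vertex set $S$ that is compatible with $A$, there is a tight path on $V(A) \cup S$ with the same endpoints as $A$. I would build such an $A$ by showing that for every vertex $v$ of the host $3$-graph, at least one of the two colours contains many absorbing $5$-tuples $(a_1, \dots, a_5)$ with both $a_1 a_2 a_3 a_4 a_5$ and $a_1 a_2 v a_3 a_4 a_5$ tight paths of that colour, and then greedily concatenating many such gadgets into one long tight path. After lifting the cluster-level structure back to the host hypergraph via a blow-up/embedding lemma for tight paths in regular tuples, I would close each piece into a tight cycle and absorb the exceptional vertices one by one. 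For part (i), at most $2$ vertices can resist absorption from both colours, the constant $2$ arising from a parity or small extremal obstruction; for part (ii), these exceptional vertices can be wrapped into a short additional tight cycle in whichever colour their local link structure supports, yielding full coverage.

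The main obstacle is the absorption step, because tight cycles are far more rigid than loose or Berge cycles. Inserting a single vertex into a tight path forces the replacement of several consecutive edges, so each absorbing gadget must span $5$ vertices in a prescribed pattern, and a pigeonhole/supersaturation argument over vertex links is needed to guarantee sufficiently many such gadgets in at least one colour at every vertex. The second delicate point is the extremal regime in which one colour nearly dominates the complete $3$-graph, where the regularity framework is less informative. There I would use a separate stability argument to show either that the dominated colour contains a long enough tight cycle to play the minority role, or that its vertex set is so small that it can be covered by a short tight cycle in the dominant colour (legal in part (ii)) or left uncovered with deficit at most $2$ (legal in part (i)).
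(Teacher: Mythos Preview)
The paper does not contain a proof of this statement. Theorem~\ref{thm:Garbe} is quoted as a result of Garbe, Mycroft, Lang, Lo, and Sanhueza-Matamala~\cite{Garbe2024} (listed as ``in preparation'') and is cited purely for context in the introduction; the present paper neither proves it nor relies on it in the arguments for Theorems~\ref{thm:k_unif_ramsey} and~\ref{thm:k_unif_Lehel}. Consequently there is no ``paper's own proof'' against which to compare your proposal.

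As for the proposal itself, taken on its own terms it is a plausible high-level outline --- regularity plus absorption is indeed the natural route from an $n-o(n)$ approximate result to an $n-O(1)$ one --- but several of your assertions are not yet arguments. You do not explain why the deficit in part~(i) should be exactly~$2$ rather than some other constant; ``parity or small extremal obstruction'' is a placeholder, not a mechanism. The absorption claim that every vertex has many monochromatic absorbing $5$-tuples in at least one colour is asserted without justification, and in a $2$-coloured complete $3$-graph this is where the real difficulty lies: a vertex whose link is coloured adversarially need not have abundant absorbers in either colour, and handling such vertices is typically the crux. Finally, your treatment of part~(ii) (``wrap the exceptional vertices into a short additional tight cycle'') would in general produce three cycles rather than two, so you would still need to merge that extra cycle into one of the existing ones. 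None of this can be checked against the present paper, which simply does not address the proof.
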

An example was given in \cite{Garbe2024} which shows the necessity of allowing the two monochromatic tight cycles in \cref{thm:Garbe} \cref{GMLLSM_ii} to possibly have the same colour. This example was generalised to all uniformities $k \geq 3$ in \cite{Lo2020}. Therefore, for $k \geq 3$, partitioning the vertices of every $2$-edge-coloured complete $k$-graph into a red tight cycle and a blue tight cycle is not possible. 

Lo and the author \cite{Lo2020} showed that an approximate Lehel's conjecture still holds for $4$-uniform tight cycles and gave a weaker result for $5$-uniform tight cycles.
\begin{theorem}[{Lo and Pfenninger \cite{Lo2020}}]
    We have the following.
    \begin{enumerate}[label= \upshape{(\roman*)}]
        \item Every $2$-edge-coloured complete $4$-graph on $n$ vertices contains a red tight cycle and a blue tight cycle that are vertex-disjoint and together cover $n-o(n)$ vertices.
        \item Every $2$-edge-coloured complete $5$-graph on $n$ vertices contains $4$ monochromatic tight cycles that are vertex-disjoint and together cover $n-o(n)$ vertices.
    \end{enumerate}
\end{theorem}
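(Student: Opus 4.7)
The plan is to apply the weak hypergraph regularity method, following the same overall template as the proof of \cref{thm:k_unif_ramsey}. First I would apply the weak regularity lemma for $k$-graphs to the $2$-edge-coloured $K_n^{(k)}$, obtaining a partition of the vertex set into equal-sized clusters $V_1, \dots, V_t$ (plus a small exceptional set). For each regular $k$-tuple of clusters of sufficient density, I would colour the corresponding edge of the reduced $k$-graph $R$ by the majority colour on that tuple, yielding a $2$-edge-coloured reduced $k$-graph in which only $o(t^k)$ edges are lost.

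The next step is to analyse the tight-component structure of each colour class of $R$. The key structural statement I would aim for is that, after an appropriate cleaning, almost all clusters can be covered by a small number of monochromatic tight components: for $k=4$, one red and one blue tight component together cover $(1-o(1))t$ clusters, while for $k=5$ one needs (up to) four monochromatic tight components, matching the lower-bound construction generalised in \cite{Lo2020}. Inside each of these tight components I would then find an almost-perfect matching consisting of edges lying in a single component, using either an LP-duality / fractional argument or a defect-type K\"onig argument, so that the matchings together cover all but $o(t)$ clusters.

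Finally, each such connected matching lifts to a monochromatic tight cycle in $K_n^{(k)}$: each matching edge supports a long tight path running through its $k$ clusters (absorbing almost all of their vertices), and the tight-connectedness within a component provides short connecting tight paths between consecutive matching edges, which can be concatenated and closed up to form a tight cycle. The main obstacle is this closing step: one must choose the connecting paths so that the final cycle uses almost all vertices of each cluster (the residue vertices contributing to the $o(n)$ loss) and so that the divisibility of the cycle length works out, which is especially delicate for $k=5$ where the reduced tight-component picture is more involved. The exceptional set from the regularity step together with the cluster residues then form the $o(n)$ uncovered vertices.
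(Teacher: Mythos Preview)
This theorem is not proved in the present paper; it is quoted in the introduction as a prior result of Lo and Pfenninger~\cite{Lo2020}, with no proof given here. So there is no ``paper's own proof'' to compare your sketch against. In fact the present paper supersedes the cited theorem: \cref{thm:k_unif_Lehel} shows that for every $k\geq 3$ a red and a blue tight cycle already suffice to cover $n-o(n)$ vertices, which is stronger than both parts of the statement you are trying to prove.

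That said, your sketch has a genuine gap at the crucial step. The regularity framework and the lifting of connected matchings to tight cycles are standard and correctly described, but the sentence ``after an appropriate cleaning, almost all clusters can be covered by a small number of monochromatic tight components'' is doing all the work and you give no argument for it. Covering almost all clusters by a bounded number of monochromatic tight components is neither the right formulation nor obviously true: what one actually needs is a fractional matching of weight $(1-o(1))t/k$ supported on the union of the relevant components, and this is the entire content of results like \cref{lem:frac_matchings_exist} in the present paper (and of the corresponding lemmas in~\cite{Lo2020}). Your proposal gives no mechanism---no analogue of \cref{lem:blue_tight_walk}, no iterative enlargement argument, no LP analysis---for producing these matchings, and a ``defect-type K\"onig argument'' is not enough on its own because the difficulty is precisely showing that the matching edges can be taken inside a small fixed collection of tight components, not merely that large matchings exist. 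Without this, the proof does not go through.
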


In subsequent work, Lo and the author \cite{Lo2024} showed that for a $2$-edge-coloured $k$-graph, $k$ vertex-disjoint monochromatic tight cycles suffice to cover almost all vertices.

\begin{theorem}[{Lo and Pfenninger \cite{Lo2024}}]
    Every $2$-edge-coloured complete $k$-graph on $n$ vertices contains~$k$ vertex-disjoint monochromatic tight cycles covering $n-o(n)$ vertices.
\end{theorem}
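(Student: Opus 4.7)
The plan is to follow the by-now-standard regularity plus reduced-$k$-graph strategy for tight-cycle partitioning results, with the bound $k$ arising from a structural covering lemma in the reduced graph.

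First, apply a hypergraph regularity lemma (for instance Gowers' or R\"odl--Skokan) to the $2$-edge-coloured complete $k$-graph $K_n^{(k)}$. This produces a partition of $V(K_n^{(k)})$ into $t$ clusters of equal size, together with a regular lower-order complex, such that all but an $o(1)$ fraction of $k$-tuples of clusters support a regular $k$-partite subcomplex of density close to $1$ in exactly one of the two colours. Viewing each coloured regular $k$-tuple as a coloured edge yields a $2$-edge-coloured reduced $k$-graph $R$ on the $t$ clusters, which is almost-complete.

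The combinatorial heart of the proof is a structural covering lemma for $R$: in any $2$-edge-coloured almost-complete $k$-graph on $t$ vertices there exist at most $k$ vertex-disjoint monochromatic tight components whose union covers all but $o(t)$ vertices. Here a \emph{tight component} of a fixed colour is an equivalence class of edges of that colour under the relation generated by ``sharing $k-1$ vertices'', and its vertex set is the union of its edges. This step is the principal obstacle; I would attempt it by iteratively removing a tight component of whichever colour covers a positive fraction of the remaining clusters, and analysing how red and blue tight components intersect on $(k-1)$-shadows so as to bound the total number of removals by $k$. The argument should parallel, but extend to all uniformities, the Gy\'arf\'as-type covering facts that underpin the $k = 3, 4$ partitioning results cited in the introduction.

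Within each of the at most $k$ selected monochromatic tight components, I would invoke a standard tight-cycle embedding lemma. Concretely, find a closed tight walk in $R$ restricted to the component that visits each cluster a balanced number of times, reserve a small constant fraction of vertices in each cluster of the component to be used for closing, greedily extend a tight path through regular $(k+1)$-tuples cluster by cluster along the walk, and finally close it into a tight cycle through the reserved vertices. Balancing is standard: one splits each cluster into slices proportional to its desired multiplicity in the walk and invokes regularity on each slice. This produces a monochromatic tight cycle covering all but $o(n)$ vertices inside the component.

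Combining everything yields $k$ vertex-disjoint monochromatic tight cycles, since the tight components chosen by the covering lemma are vertex-disjoint at the cluster level. The uncovered set of $o(n)$ vertices collects the exceptional set of the regularity partition, the clusters missed by the covering lemma, and the small reserves used to close each cycle. The decisive difficulty is the structural covering lemma for the reduced graph by $k$ monochromatic tight components; once this is in place, the regularity and embedding steps follow well-developed templates from the author's earlier work with Lo.
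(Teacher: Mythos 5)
There is a genuine gap, and it sits exactly at the step you yourself flag as the heart of the argument. Your proposed covering lemma (at most $k$ vertex-disjoint monochromatic tight components covering all but $o(t)$ clusters of the reduced $k$-graph) would not suffice even if it were true, because the cycle-embedding step converts a \emph{(fractional) matching} inside a tight component into a tight cycle whose length is about $k$ times the matching weight; it does not let you cover almost all vertices merely spanned by a component. A monochromatic tight component can touch almost every cluster while having bounded matching number: for instance, the component consisting of all edges of one colour containing a fixed $(k-1)$-set has fractional matching number at most $1$, so the closed tight walk you build inside it can only be balanced over a negligible number of clusters and the resulting cycle is short. Thus the $k$ cycles produced by your scheme could have total length $o(n)$. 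The statement actually needed in the reduced $k$-graph is a fractional matching of weight $(1-o(1))t/k$, with all non-zero weights bounded below, supported on a bounded number of monochromatic tight components; this is precisely the hypothesis format of \cref{cor:matching_to_cycle}, and producing such a matching (inside at most $k$ components) is the real content of the proof in \cite{Lo2023}. Your iterative removal of components controls which vertices are \emph{touched}, not the matching numbers of the components, so it measures the wrong quantity; moreover, vertex-disjointness of the components is neither needed nor easily arranged (tight components of different colours typically share vertices) — what must be disjoint are the matchings, and hence the cycles built from them.

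Note also that the present paper does not prove this theorem: it is quoted from \cite{Lo2023}, and the paper's own contribution is the stronger \cref{thm:k_unif_Lehel}. There, \cref{lem:frac_matchings_exist} produces an almost perfect fractional matching (weight $(1-3\eta)n/k$, all positive weights at least $\beta$) supported on just \emph{one} red and \emph{one} blue tight component, obtained by iteratively growing matchings through blow-ups (\cref{prop:matching_to_fractional}, \cref{lem:matching_inc_1,lem:matching_inc_2}) rather than by a covering argument; feeding this into \cref{cor:matching_to_cycle} gives two vertex-disjoint monochromatic tight cycles covering $n-o(n)$ vertices, which supersedes the $k$-cycle statement you set out to prove. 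The regularity and embedding infrastructure you describe is standard and is already encapsulated in \cref{cor:matching_to_cycle}; the missing mathematics in your proposal is entirely in producing a large fractional matching confined to few monochromatic tight components.
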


In this paper, we improve on this result by showing that an approximate Lehel's conjecture holds for $k$-uniform tight cycles for all $k \geq 3$. 
\begin{theorem} \label{thm:k_unif_Lehel}
    For $k \geq 3$ and $\eps > 0$, there exists a positive integer $n_0 \coloneqq n_0(k, \eps)$ such that if $n \geq n_0$, then every $2$-edge-coloured complete $k$-graph on $n$ vertices contains a red tight cycle and a blue tight cycle that are vertex-disjoint and together cover at least $(1-\eps)n$ vertices.
\end{theorem}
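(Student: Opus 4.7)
The plan is to closely follow the regularity-based framework used for Theorem \ref{thm:k_unif_ramsey}. Let $K$ be a $2$-edge-coloured complete $k$-graph on $n$ vertices. I would apply the (weak) hypergraph regularity lemma to $K$ to obtain a partition of $V(K)$ into $t$ clusters of near-equal size, together with a reduced $k$-graph $R$ on cluster-indices whose edges are assigned a colour according to the majority colour of the corresponding regular, dense $k$-tuple of clusters. All but an $o(1)$-fraction of the potential edges of $R$ are coloured, and it suffices to work inside~$R$.

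The key step is a structural lemma in $R$: there exist disjoint $A, B \subseteq V(R)$ with $k \mid |A|$, $k \mid |B|$, and $|A| + |B| \geq (1-\eps/2)t$, such that $R_{\red}[A]$ is tightly connected and contains an almost-spanning red tight matching, and $R_{\blue}[B]$ is tightly connected and contains an almost-spanning blue tight matching. One proves this by analysing the tight-component structure of the two colour classes in $R$, reusing the component analysis carried out in the proof of Theorem~\ref{thm:k_unif_ramsey}: either one colour (say red) already has a tight component meeting $\geq (1-\eps/2)t$ clusters, in which case we take $A$ to be this component and $B = \varnothing$, and the "blue tight cycle" is taken to be empty; or the tight components of the two colours can be paired up to partition $V(R)$ into a red-dominated part and a blue-dominated part, which we take as $A$ and~$B$ after discarding $o(t)$ vertices to obtain divisibility by~$k$. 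Because the coverage target here is $n$ rather than $(k+1+\eps)n$, the stability analysis is not harder than in Theorem~\ref{thm:k_unif_ramsey}.

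Given such $A$ and $B$, I would run the connection-and-blow-up procedure from Theorem~\ref{thm:k_unif_ramsey} inside each part: concatenate the red tight matching on $A$ via short connecting red tight paths (existence of which is guaranteed by tight-connectivity in $R$ and regularity in $K$) into a single red tight cycle visiting $\geq (1-\eps/4)|A| \cdot n/t$ vertices of $V(K)$, and symmetrically inside $B$ for the blue tight cycle. Since $A$ and $B$ index disjoint cluster sets, the two resulting tight cycles are vertex-disjoint, and together they cover at least $(1-\eps)n$ vertices of $V(K)$, as required.

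The main obstacle is the structural partition lemma in $R$. Tight-connectivity for $k \geq 3$ is governed by $\gcd$-type divisibility conditions on closed tight walks, so unlike in the graph Lehel problem one cannot just read off the red and blue connected components. One must show that the "mixed" or "wrong-divisibility" tight components can be absorbed into $o(t)$-many trimmed vertices, which is exactly the combinatorial content extracted in the Ramsey proof; transcribing it to the $n$-vertex setting and ensuring $k \mid |A|, |B|$ is where the bulk of the technical work lies.
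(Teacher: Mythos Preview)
Your proposal has a genuine gap at the heart of the argument: the ``structural partition lemma'' you state for the reduced $k$-graph---a vertex partition $V(R) = A \dot\cup B \dot\cup (\text{trash})$ with $R_{\red}[A]$ tightly connected and carrying a near-perfect red matching, and symmetrically for $R_{\blue}[B]$---is not what the paper establishes, and your sketch (``either one colour has a huge tight component, or the components pair up'') does not prove it. For $k\ge 3$ there is no dichotomy of this kind available from the component analysis in the Ramsey proof; that analysis only produces a matching of size roughly $t/(k+1)$ inside a \emph{single} monochromatic tight component, which covers only a $k/(k+1)$ fraction of the clusters, not almost all of them. The remark about $\gcd$-type divisibility governing tight connectivity is also off target: divisibility constraints concern tight-cycle \emph{lengths}, not the tight-component partition, so there is no ``wrong-divisibility component'' to trim away.

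What the paper actually does is different in kind. It proves (Lemma~\ref{lem:frac_matchings_exist}) that in any $(1-\eps,\eps)$-dense $2$-coloured $k$-graph there is a fractional matching of weight $(1-o(1))n/k$ supported on $R\cup B$ for one red tight component $R$ and one blue tight component $B$; crucially $R$ and $B$ are \emph{not} vertex-disjoint, and the fractional matching spreads weight $1/k$ over both colours on each of many $(k+1)$-cliques. This is obtained by an iterative enlargement argument (Lemmas~\ref{lem:matching_inc_1} and~\ref{lem:matching_inc_2}) whose engine is the structural Lemma~\ref{lem:blue_tight_walk} about closed tight pseudo-walks meeting two blue components---a tool your outline never invokes. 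The passage from ``fractional matching in $R\cup B$'' to ``two vertex-disjoint monochromatic tight cycles'' is then handled by the black-box Corollary~\ref{cor:matching_to_cycle}, not by a vertex partition of the reduced graph. If you want to salvage your plan, you would need an independent proof of your partition lemma; as written, the step ``tight components of the two colours can be paired up to partition $V(R)$'' is an assertion, not an argument.
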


\subsection{Proof Sketch}

We now give a sketch of the proof of \cref{thm:k_unif_ramsey} and \cref{thm:k_unif_Lehel}. The novel idea of this paper is to combine a method from \cite{Lo2025} (on iteratively finding fractional matchings) with a lemma about the structure of monochromatic tight components in `almost complete' red-blue edge-coloured $k$-graphs from \cite{Lo2024} (that allows us to find these fractional matchings). 

We use a, by now standard, method called \emph{{\L}uczak's connected matching method} (introduced in \cite{Luczak1999}).
Roughly speaking, the idea of the method is to apply Szemer\'edi's Regularity Lemma (or a hypergraph version thereof) and then find a matching in the reduced graph that is `connected'  (where the appropriate notion of `connectivity' depends on the application). This `connected matching' is then converted into a cycle in the original graph (or hypergraph).  
This means that the problem of finding a cycle in the original graph is reduced to the problem of finding a `connected matching', that is a matching using only edges from a `connected component', in the reduced graph. To make this more precise, we need to define the right notion of a `connected component' for our setting which is the notion of a \emph{monochromatic tight component}. This is defined as follows. Let $H$ be a $k$-graph. We say that a sequence of edges $e_1 \dots e_t$ in $H$ is a \emph{tight pseudo-walk (from $e_1$ to $e_t$)} if for every $i \in [t-1]$, we have $|e_i \cap e_{i+1}| \geq k-1$.\footnote{For technical reasons, we allow $|e_i \cap e_{i+1}| = k$, that is, $e_i = e_{i+1}$.} A $k$-graph $H$ is called \emph{tightly connected} if for any two edges $f, f' \in E(H)$, there exists a tight pseudo-walk from $f$ to $f'$ in $H$. A \emph{tight component of $H$} is a maximal \emph{tightly connected} sub-$k$-graph of $H$. If $H$ is $2$-edge-coloured, then the \emph{monochromatic tight components of $H$} are the tight components of $H^{\red}$ and $H^{\blue}$ (which are also called \emph{red tight components} and \emph{blue tight components}), respectively.\footnote{We denote by $H^{\red}$ the sub-$k$-graph of $H$ induced by the red edges of $H$ and define $H^{\blue}$ analogously.}

The hypergraph version of {\L}uczak's connected matching method we use (see \cref{cor:matching_to_cycle}) now states that in order to prove \cref{thm:k_unif_ramsey} it suffices to show that every $2$-edge-coloured `almost complete' $k$-graph $H$ on $n$ vertices contains a monochromatic tight component $T$ and a matching in $T$ of size at least $\frac{n}{k+1}$ (in fact a fractional matching of the same weight suffices). 

We find this matching as follows. Assume for simplicity that $H$ is complete. In actuality, we will have to work with a $k$-graph that is only `almost complete'.\footnote{We will define later what exactly we mean by `almost complete'.} However, this will not complicate the argument much. 
We start by showing that there is a monochromatic tight component in $H$ that contains a matching $M$ of size at least $c_k n$, for some small constant $c_k$ depending only on $k$ (see \cref{lem:large_MTC}). Let us assume that this monochromatic tight component is red and denote it by $R$. If $M$ is not yet large enough, we show that a larger (fractional) matching in a monochromatic tight component can be found. We then iterate this process to find larger and larger (fractional) matchings that are contained in monochromatic tight components. To avoid having to deal with increasingly complex fractional matchings, we convert fractional matchings back to integral ones by moving to a blow-up of $H$ (noting that a matching in a blow-up of $H$ can be converted into a fractional matching in $H$ of the corresponding weight, as we show in  \cref{prop:matching_to_fractional}). 

Since it is possible that there is no larger matching in a red tight component, we might have to switch the colour of our matching. To that end, we first show that we can either find a larger (fractional) matching in $R$ or we can find a matching $M'$ in a blue tight component with $|M'| = |M|$ (see \cref{lem:matching_inc_1}). We assign each edge $e \in M$ a distinct vertex $v_e$ not covered by $M$.
If for some $e \in M$ all the edges in $H[e \cup \{v_e\}]$ are red, then $H[e \cup \{v_e\}] \subseteq R$ and we get a larger fractional matching by giving weight $1/k$ to each of the $k+1$ edges in $H[e \cup \{v_e\}]$. It follows that for all $e \in M$, there is a blue edge in $H[e \cup \{v_e\}]$. This gives us a matching $M'$ of blue edges in $H$ with $|M'| = |M|$. We would like to now conclude that all the edges in $M'$ are in the same blue tight component of $H$ (unless there is a larger (fractional) matching in $R$). To show that this is indeed the case, we use a crucial lemma about the structure of the monochromatic tight components in $H$ that Lo and the author introduced in \cite{Lo2024}. This lemma (see \cref{lem:blue_tight_walk}) implies that if there is a tight pseudo-walk $e_1 \dots e_t$ where $e_1$ and $e_t$ are in different blue tight components and $e_2, \dots, e_{t-1}$ are all in the red tight component $R$, then any tight pseudo-walk from $e_1$ to $e_t$ contains an edge of $R$. 
This gives us a powerful tool to find edges in particular monochromatic tight components. Suppose there are two edges $e', f' \in M'$ that are in different blue tight components of $H$. Let $e$ and $f$ be the unique edges in $M$ such that $|e \cap e'| = k-1$ and $|f \cap f'|= k-1$. Since $e$ and $f$ are both in the red tight component $R$, there exists a tight pseudo-walk $P$ from $e$ to $f$. Applying the lemma to the tight pseudo-walk $e' P f'$ gives us that every tight pseudo-walk from $e'$ to $f'$ contains an edge of $R$. This allows us to find a larger (fractional) matching in $R$ (see \cref{claim:R_small_fractional} in the proof of \cref{lem:matching_inc_1}). Thus if we assume that no larger (fractional) matching in $R$ exists, we can conclude that all edges in $M'$ are in the same blue tight component $B$ of $H$.

We now enlarge one of $M$ and $M'$ as follows (see \cref{lem:matching_inc_2}). Pick an edge $g$ of $H$ which is disjoint from $V(M) \cup V(M')$. Note that $|V(M) \cup 
V(M')| = (k+1)|M|$, so assuming $|M| < \frac{n}{k+1}$ there are just enough vertices to pick $g$. Assume without loss of generality that~$g$ is red. We may assume that $g \notin R$ (else we get a larger matching in $R$ by adding~$g$ to~$M$). Let $e \in M$ and let $e' \in M'$ be the unique edge with $|e \cap e'| = k-1$. Consider a tight pseudo-walk $P^*$ from $e$ to $g$ with $V(P^*) \subseteq e \cup g$. Since $e$ and $g$ are in different red tight components of $H$, we have that~$P^*$ contains a blue edge. Let $f$ be the first blue edge on~$P^*$. If $f \notin B$, then the tight pseudo-walk obtained by starting with $e'$ and then following $P^*$ from $e$ to $f$ allows us to again apply \cref{lem:blue_tight_walk} to get a larger fractional matching in~$R$. Hence we may assume that $f \in B$. Generating more edges of $B$ in this way allows us to obtain a larger fractional matching in $B$.

To prove \cref{thm:k_unif_Lehel}, we proceed similarly by first constructing a large matching $M$ in a monochromatic tight component in the same way. Assume without loss of generality that $M$ is in a red tight component $R$. By the argument above, we can now assume that $|M| \geq \frac{n}{k+1}$ (possibly replacing $H$ by a blow-up of it). Let $W \coloneqq V(H) \setminus V(M)$ and note that $|W| \leq \frac{n}{k+1}$. Assign each vertex $w \in W$ to a unique edge $e_w \in M$. As before we can show that unless we can move to a larger (fractional) matching in $R$, there exists a blue tight component $B$ of $H$ such that for each $w \in W$, we have that $H[e_w \cup \{w\}]$ contains an edge of $B$. Since $e_w \cup \{w\}$ is a set of $k+1$ vertices that contains a red edge of $R$ and a blue edge of $B$, any edge in $H[e_w \cup \{w\}]$ is in $R$ or $B$. We let $\varphi$ be the fractional matching which gives weight $1/k$ to all $k+1$ edges of $H[e_w \cup \{w\}]$ for each $w \in W$ and weight $1$ to each edge in $M \setminus \{e_w \colon w \in W\}$. We have that $\varphi$ has weight $\frac{n}{k}$ and only gives non-zero weight to edges in $R$ or $B$. The hypergraph version of {\L}uczak's connected matching method we use (\cref{cor:matching_to_cycle}) implies that this is enough to prove \cref{thm:k_unif_Lehel}.

\section{Preliminaries}

For a positive integer $k$, we let $[k] = \{1, \dots, k\}$ and for a set $V$, we let $\binom{V}{k}$ be the set of subsets of $V$ of size $k$. We sometimes abuse notation by writing $x_1 \dots x_j$ for $\{x_1, \dots, x_j\}$.

A \emph{$k$-graph} $H$, is a pair of sets $(V(H),E(H))$ such that $E(H) \subseteq \binom{V(H)}{k}$. We call the elements of $V(H)$ \emph{vertices} and the elements of $E(H)$ \emph{edges}. We abuse notation and identify $H$ with its edge set $E(H)$. In particular, by $|H|$ we mean the number of edges of $H$. For $W \subseteq V(H)$, we denote by $H[W]$ the $k$-graph with $V(H[W]) = W$ and $E(H[W]) = \{e \in E(H) \colon e \subseteq W\}$.
For $S \subseteq V(H)$ with $1 \leq |S| \leq k-1$, we let $N_H(S) \coloneqq \{S' \in \binom{V(H)}{k-|S|} \colon S' \cup S \in H\}$ and $d_H(S) \coloneqq |N_H(S)|$.

A \emph{matching} in a $k$-graph is a set $M$ of pairwise disjoint edges of $H$. For a matching $M$ in $H$, we denote by $V(M)$ the set of vertices covered by $M$.
A \emph{fractional matching} in $H$ is a function $\varphi \colon H \rightarrow [0,1]$ such that for each $v \in V(H)$, $\sum_{e \in H, \, v \in e} \varphi(e) \leq 1$. For each edge $e \in H$, we call $\varphi(e)$ the \emph{weight of $e$ given by $\varphi$}. The \emph{weight of $\varphi$} is $\sum_{e \in H} \varphi(e)$. For a matching $M$ in $H$, the \emph{fractional matching induced by $M$} is the fractional matching $\varphi \colon H[V(M)] \rightarrow [0,1]$ with $\varphi(e) = 1$ if $e \in M$ and $\varphi(e) =0$ otherwise.
For $r \in \mathbb{N}$, $\varphi$ is called \emph{$1/r$-fractional}, if $\varphi(e) \in \{0, 1/r, 2/r, \dots, (r-1)/r, 1\}$ for all $e \in H$.
Let $H_1$ and $H_2$ be $k$-graphs with $V(H_1) \cap V(H_2) = \varnothing$. We define $H_1 \cup H_2$ to be the $k$-graph with $V(H_1 \cup H_2) = V(H_1) \cup V(H_2)$ and $E(H_1 \cup H_2) = E(H_1) \cup E(H_2)$. 
Let $\varphi_1$ and $\varphi_2$ be fractional matchings in $H_1$ and $H_2$, respectively. Then we define $\varphi_1 + \varphi_2 \colon H_1 \cup H_2 \rightarrow [0,1]$ to be the fractional matching in $H_1 \cup H_2$ with $(\varphi_1 +\varphi_2)(e) = \varphi_1(e)$ if $e \in H_1$ and $(\varphi_1 +\varphi_2)(e) = \varphi_2(e)$ if $e \in H_2$. Let $H$ be a $k$-graph and $H'$ be a sub-$k$-graph of $H$. For a fractional matching $\varphi$ in $H'$, we define the \emph{completion of $\varphi$ with respect to $H$} to be the fractional matching $\varphi^H$ in $H$ with $\varphi^H(e) = \varphi(e)$ for $e \in H'$ and $\varphi^H(e) = 0$ for $e \in H \setminus H'$. If $H$ is a $2$-edge-coloured $k$-graph and $T$ is a monochromatic tight component of $H$, then we abuse notation and do not distinguish between fractional matchings in $T$ and fractional matchings in $H$ which give non-zero weight only to edges in $T$. We call both of these \emph{fractional matchings in $T$}.

We use the \emph{hierarchy notation}, that is when we say that a statement holds for $0 < a \ll b \leq 1$ this is a short form of stating that there exists a non-decreasing function $a_0 \colon (0,1] \rightarrow (0,1]$ such that the statement holds for all $a,b \in (0,1]$ with $a \leq a_0(b)$ (so the statement holds provided that $a$ is sufficiently small in terms of $b$). Hierarchies with more variables are defined similarly. Moreover, if $1/a$ appears in a hierarchy we implicitly assume that $a$ is a positive integer. 

We omit floors and ceilings whenever doing so does not affect the argument. 

\section{The Structure of Monochromatic Tight Components and Blow-ups}

The following lemma is a result from earlier work of Lo and the author \cite{Lo2024}. It gives crucial structural information about the monochromatic tight components in a $2$-edge-coloured `almost complete' $k$-graph. 
To state the lemma, 
we now define our notion of `almost complete'. We say that a $k$-graph is \emph{$(\mu, \alpha)$-dense} if for every $i \in [k-1]$ and all but at most $\alpha \binom{n}{i}$ sets $S \in \binom{V(H)}{i}$, we have $d_H(S) \geq \mu \binom{n}{k-i}$ and $d_H(S) = 0$ for all other $S \in \binom{V(H)}{i}$. 
We think of a $k$-graph $H$ that is $(1-\eps, \eps)$-dense, for some small $\eps >0$, as `almost complete'.
One reason this is a convenient notion of `almost complete' is that if we have an edge $e \in H$, then this means that any subset $S \subsetneq e$ is contained in almost as many edges as if $H$ was complete (since $S$ is contained in $e$, we have $d_H(S) > 0$ and thus $d_H(S) \geq (1-\eps) \binom{n}{k-|S|}$ since $H$ is $(1-\eps, \eps)$-dense).
Moreover, we define a \emph{closed tight pseudo-walk} in a $k$-graph $H$ as a tight pseudo-walk $e_1 \dots e_t$ with $|e_1 \cap e_t| \geq k-1$. 

\begin{lemma}[{\cite[Lemma 3.6]{Lo2024}}] \label{lem:blue_tight_walk}
    Let $1/n \ll \eps \ll 1/k \leq 1/2$. Let~$H$ be a $2$-edge-coloured $(1-\eps, \eps)$-dense $k$-graph on~$n$ vertices. Let $Q= e_1 \dots e_m$ be a closed tight pseudo-walk in~$H$ such that for some $1 < i < m$, we have that~$e_1$ and~$e_i$ are edges in different blue tight components of~$H$. Then there exists a red edge in $\{e_2 \dots, e_{i-1}\}$ that is in the same red tight component as a red edge in $\{e_{i+1}, \dots, e_m\}$. The statement also holds with colours reversed.
\end{lemma}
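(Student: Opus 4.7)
The plan is to argue by contradiction. Suppose, contrary to the conclusion, that every red edge in $\{e_2, \dots, e_{i-1}\}$ lies in a red tight component disjoint from every red tight component containing a red edge of $\{e_{i+1}, \dots, e_m\}$. I will derive a contradiction by constructing a blue tight pseudo-walk from $e_1$ to $e_i$, which would place them in the same blue tight component of $H$ and contradict the hypothesis.

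View $Q$ cyclically, so that there are two arcs joining $e_1$ to $e_i$: the forward arc $P = e_1 e_2 \dots e_i$ and the backward arc $P' = e_i e_{i+1} \dots e_m e_1$, where the last transition of $P'$ is legitimate because $Q$ is closed, so $|e_m \cap e_1| \geq k-1$. A first cheap observation is that each of $P$ and $P'$ must contain at least one red edge; otherwise $P$ or $P'$ would already be an entirely blue tight pseudo-walk from $e_1$ to $e_i$, putting them in the same blue tight component.

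The heart of the argument is a local colour-rerouting. Decompose $P$ into maximal runs of consecutive red edges $\rho_1, \dots, \rho_p$ separated by blue intervals; each $\rho_j$ is contained in a single red tight component $R_j$ by the defining property of tight pseudo-walks. For each $j$, I would use the $(1-\eps,\eps)$-density of $H$ to find a blue tight pseudo-walk that \emph{bypasses} $\rho_j$: the blue edges immediately bracketing $\rho_j$ share $k-2$ vertices (up to adjacency along the walk), and one can exploit the largeness of their common $(k-1)$-set neighbourhoods in blue, together with the hierarchy $1/n \ll \eps \ll 1/k$, to stitch together a short blue detour of bounded length. Concatenating these detours with the unaltered blue intervals of $P$ yields a blue tight pseudo-walk from $e_1$ to $e_i$, the sought contradiction.

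The main obstacle is ruling out the bad case in which no blue bypass exists around some $\rho_j$, because too many of the relevant $(k-1)$-set neighbourhoods are saturated by red edges of $R_j$. In that regime one must show that the failure to find a blue detour forces an extended red tight pseudo-walk inside $R_j$ which, by the closedness of $Q$ and a pigeonhole on the cyclic structure of the walk, must reach the backward arc $P'$; this produces a red edge of $R_j$ lying on $P'$, contradicting our initial assumption that the red tight components visited by $P$ and $P'$ are disjoint. Making this dichotomy -- blue bypass, or else shared red tight component with $P'$ -- quantitatively precise is the technical crux, and it is exactly where the structural theory for $(1-\eps,\eps)$-dense $k$-graphs developed in \cite{Lo2023} does the real work.
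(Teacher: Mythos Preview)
The paper does not prove this lemma; it is quoted from \cite[Lemma~3.6]{Lo2023} and used as a black box, so there is no in-paper argument to compare against.

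That said, your sketch has concrete gaps that go beyond missing details. First, the claim that the blue edges immediately bracketing a maximal red run $\rho_j$ ``share $k-2$ vertices (up to adjacency along the walk)'' is false in general: if $\rho_j = e_a e_{a+1} \dots e_b$ with $b - a$ large, the flanking blue edges $e_{a-1}$ and $e_{b+1}$ can be vertex-disjoint, so there is no common $(k-1)$-set neighbourhood to exploit for a short blue detour. The density hypothesis gives large neighbourhoods for each $(k-1)$-set separately, but does not by itself let you stitch across a long red run while staying blue.

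Second, and more seriously, the second horn of your dichotomy is not an argument. You assert that failure to find a blue bypass around $\rho_j$ ``forces an extended red tight pseudo-walk inside $R_j$ which \dots must reach the backward arc $P'$'', invoking closedness of $Q$ and pigeonhole. But nothing you have written connects $R_j$ --- a red tight component determined entirely by edges on the forward arc $P$ --- to any edge of $P'$. A red run on $P$ can sit in a red component that never touches $P'$ and still obstruct every attempted local blue bypass; closedness of $Q$ gives only $|e_m \cap e_1| \geq k-1$, which says nothing about $R_j$. You yourself label this step ``the technical crux'' and defer it to the structural theory in \cite{Lo2023}; that is an acknowledgement that the proposal, as written, does not contain the key idea.
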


We will make use of \cref{lem:blue_tight_walk} several times to find more edges of a given monochromatic tight component. To define the closed tight pseudo-walks to apply \cref{lem:blue_tight_walk}, it will be convenient to use the notion of a \emph{tight pseudo-walk induced by a vertex sequence}. For a sequence $x_1 \dots x_t$ of vertices in a $k$-graph $H$, we define the \emph{tight pseudo-walk induced by the vertex sequence $x_1 \dots x_t$} as the tight pseudo-walk $e_1 \dots e_{t-k+1}$ where for $i \in [t-k+1]$, $e_i = x_i \dots x_{i+k-1}$.

As we mentioned in the proof sketch, we use blow-ups to avoid some of the difficulties of dealing with fractional matchings. We make sure that our fractional matchings always give rational weights to edges. So they are $1/r$-fractional matchings, for some $r \in \mathbb{N}$, which means they correspond to matchings in an $r$-blow-up of the original $k$-graph. To that end, let us define $r$-blow-ups formally. 

\begin{definition}[$r$-blow-up] \label{def:blow_up}
    For $k$-graphs $H_*$ and $H$, we say that $H_*$ is an $r$-blow-up of $H$ if there exists a partition $\{V_x \colon x \in V(H)\}$ of $V(H_*)$ into parts of size $r$ such that 
    \[
        H_* = \bigcup_{x_1\dots x_k \in H} K_{V_{x_1}, \dots, V_{x_k}},
    \]
    where $K_{V_{x_1}, \dots, V_{x_k}}$ is the complete $k$-partite $k$-graph with parts $V_{x_1}, \dots, V_{x_k}$. Moreover, if $H$ is $2$-edge-coloured, we require that $H_*$ is $2$-edge-coloured and
    \[
        H_*^{\red} = \bigcup_{x_1\dots x_k \in H^{\red}} K_{V_{x_1}, \dots, V_{x_k}} \quad \text{ and } \quad H_*^{\blue} = \bigcup_{x_1\dots x_k \in H^{\blue}} K_{V_{x_1}, \dots, V_{x_k}}.
    \]
    Denote by $\pi \colon H_* \rightarrow H$, the map such that for $e_* \in H_*$, $\pi(e_*) = x_1 \dots x_k \in H$ is the unique edge such that $e_* \in K_{V_{x_1}, \dots, V_{x_k}}$. Note that, if $H$ is $2$-edge-coloured, then $\pi$ induces a map, denoted by $\pi_{\mathrm{MTC}}$, from the set of monochromatic tight components $\mathcal{C}(H_*)$ of $H$ to the set of monochromatic tight components $\mathcal{C}(H)$ of $H$, where for $T \in \mathcal{C}(H_*)$, we have $\pi_{\mathrm{MTC}}(T) = \{\pi(e)\colon e \in T\} \in \mathcal{C}(H)$. In fact, it is easy to see that $\pi_{\mathrm{MTC}}$ is a bijection and for each $T \in \mathcal{C}(H_*)$, the monochromatic tight component $\pi_{\mathrm{MTC}}(T)$ has the same colour as~$T$.
\end{definition} 

The following proposition gives the fact that having a $1/(rr')$-fractional matching in a $k$-graph $H$ of weight $\mu |V(H)|$ is equivalent to having a $1/r'$-fractional matching in the $r$-blow-up $H_*$ of $H$ of weight $\mu |V(H_*)|$. Moreover, this equivalence behaves well with respect to monochromatic tight components.

\begin{proposition}[{see also \cite[Proposition 3.3]{Lo2025}}] \label{prop:matching_to_fractional}
    Let $r, r' \geq 1$ and $k \geq 2$. Let $H$ be a $2$-edge-coloured $k$-graph and let $H_*$ be an $r$-blow-up of $H$. Let $\cC_*$ be a set of monochromatic tight components in $H_*$ and let $\cC \coloneqq \{\pi_{\mathrm{MTC}}(T_*) \colon T_* \in \cC_*\}$ be the corresponding set of monochromatic tight components in $H$.
    Then the following are equivalent.
    \begin{enumerate}[label = \upshape{(F\arabic*)}, leftmargin= \widthof{F100000}]
        \item There exists a $1/r'$-fractional matching $\varphi_*$ in $H_*$ of weight $\mu |V(H_*)|$ such that $\{e \in H_* \colon \varphi_*(e) > 0\} \subseteq \bigcup \cC_*$. \label{F1}

        \item There exists a $1/(rr')$-fractional matching $\varphi$ in $H$ of weight $\mu |V(H)|$ such that $\{e \in H \colon \varphi(e) > 0\} \subseteq \bigcup \cC$. \label{F2}
    \end{enumerate}
\end{proposition}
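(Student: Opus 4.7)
The plan is to prove the two directions by explicit construction.

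For (F1) $\Rightarrow$ (F2), I would define $\varphi(e) := \frac{1}{r} \sum_{e_* \in \pi^{-1}(e)} \varphi_*(e_*)$ for each $e \in H$. Since $\varphi_*$ is $1/r'$-fractional, so is each term in the sum, so $\varphi$ is $1/(rr')$-fractional. Writing $e = x_1 \dots x_k$ and regrouping the sum by the unique vertex of $V_{x_1}$ contained in each $e_*$ bounds it by $|V_{x_1}| = r$, so $\varphi(e) \le 1$; similarly, $\sum_{e \ni x} \varphi(e) = \frac{1}{r}\sum_{v \in V_x}\sum_{e_* \ni v}\varphi_*(e_*) \le 1$ for every $x \in V(H)$. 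The weight matches because $|V(H_*)| = r|V(H)|$, and the support condition transfers via the bijection $\pi_{\mathrm{MTC}}$: if $\varphi(e) > 0$, then some $e_* \in \pi^{-1}(e)$ has $\varphi_*(e_*) > 0$, so $e_* \in T_*$ for some $T_* \in \cC_*$ and hence $e \in \pi_{\mathrm{MTC}}(T_*) \in \cC$.

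For (F2) $\Rightarrow$ (F1), set $a_e := rr'\varphi(e) \in \mathbb{Z}_{\ge 0}$. For each $x \in V(H)$, the slot set $S_x := \{(e, j) : e \ni x,\, j \in [a_e]\}$ satisfies $|S_x| = \sum_{e \ni x} a_e \le rr'$. I would independently for each $x$ choose an injection $\alpha_x \colon S_x \hookrightarrow V_x \times [r']$, and let $\sigma_x(e, j) \in V_x$ denote the first coordinate of $\alpha_x(e, j)$. Then define $\varphi_*(e_*)$ to be $1/r'$ times the number of slots $(e, j)$ such that $e = \pi(e_*) = x_1 \dots x_k$ and $\sigma_{x_i}(e, j) = v_i$ for every $i \in [k]$ (where $e_* = v_1 \dots v_k$ with $v_i \in V_{x_i}$). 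The injectivity of $\alpha_{x_1}$ bounds $\varphi_*(e_*)$ by the number of slots mapping into $\{v_1\} \times [r']$, which is at most $r'$, giving $\varphi_*(e_*) \le 1$; the same injectivity gives $\sum_{e_* \ni v}\varphi_*(e_*) \le 1$ for each $v \in V(H_*)$. The total weight is $\frac{1}{r'}\sum_e a_e = \mu|V(H_*)|$, and the support condition is preserved because each $\pi^{-1}(e)$ lies in a single tight component of $H_*$ (any two edges of $\pi^{-1}(e)$ are joined by a tight walk within $\pi^{-1}(e)$ that changes a single coordinate at a time), so if $\varphi_*(e_*) > 0$ then $\pi(e_*) \in \cC$ forces $e_* \in \bigcup \cC_*$.

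The main obstacle is the slot-assignment design in the (F2) $\Rightarrow$ (F1) direction: the naive uniform distribution $\varphi_*(e_*) = \varphi(\pi(e_*))/r^{k-1}$ is only $1/(r^k r')$-fractional, not $1/r'$-fractional as required. The injection $\alpha_x$ into $V_x \times [r']$ circumvents this by exploiting the ``extra'' $r'$-fold capacity at each vertex $v \in V_x$ of $H_*$, allowing one to realise the correct total weight with genuinely $1/r'$-fractional values while keeping both the edge-wise cap $\varphi_*(e_*) \le 1$ and the vertex degree constraints in $H_*$ simultaneously intact.
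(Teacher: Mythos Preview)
Your proof is correct. The direction (F1)$\Rightarrow$(F2) is exactly the paper's argument: define $\varphi(e)=\frac{1}{r}\sum_{e_*\in\pi^{-1}(e)}\varphi_*(e_*)$ and verify the constraints and weight by regrouping over the fibres $V_x$. For (F2)$\Rightarrow$(F1) the paper simply declares the direction ``analogous'' and omits it; your slot-assignment construction fills this in explicitly and, as you point out, this direction is not a pure formality---the obvious uniform spread $\varphi_*(e_*)=\varphi(\pi(e_*))/r^{k-1}$ would only be $1/(r^kr')$-fractional. Your injection $\alpha_x\colon S_x\hookrightarrow V_x\times[r']$ is precisely what is needed (equivalently, one passes to the $rr'$-blow-up of $H$, realises $\varphi$ as an integral matching there by greedily assigning slots at each vertex, and then averages back down to $H_*$). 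The support argument via the observation that all of $\pi^{-1}(e)$ lies in a single monochromatic tight component is also correct and is implicit in the paper's claim that $\pi_{\mathrm{MTC}}$ is a bijection.
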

\begin{proof}
    We show that \cref{F1} implies \cref{F2}. We omit the proof of the other direction as it is analogous.
    Given $\varphi_*$, we let $\varphi \colon H \rightarrow [0,1]$ be the fractional matching with 
    \begin{align*}
        \varphi(e) \coloneqq \frac{1}{r} \sum_{f \in \pi^{-1}(e)} \varphi_*(f).
    \end{align*}
    To check that $\varphi$ indeed takes values in $[0,1]$, fix $e \in H$ and $x \in e$. We have
    \begin{align*}
        \varphi(e) = \frac{1}{r} \sum_{f \in \pi^{-1}(e)} \varphi_*(f) = \frac{1}{r} \sum_{y \in V_x} \sum_{f \in \pi^{-1}(e),\, y \in f} \varphi_*(f) \leq \frac{1}{r} \sum_{y \in V_x} \sum_{f \in H_*, \, y \in f} \varphi_*(f) \leq 1,
    \end{align*}
    since $|V_x| = r$ and for each $y \in V_x$, $\sum_{f \in H_*, \, y \in f} \varphi_*(f) \leq 1$ as $\varphi_*$ is a fractional matching in $H_*$.
    To check that $\varphi$ is indeed a fractional matching in $H$, note that, for $x \in V(H)$, we have
    \begin{align*}
        \sum_{\substack{e \in H, \, x \in e}} \varphi(e) = \frac{1}{r} \sum_{\substack{e \in H, \, x \in e}} \sum_{f \in \pi^{-1}(e)} \varphi_*(f) = \frac{1}{r} \sum_{f \in H_*, \, f \cap V_x \neq \varnothing} \varphi_*(f) = \frac{1}{r} \sum_{y \in V_x} \sum_{f \in H_*, \, y \in f} \varphi_*(f) \leq 1.
    \end{align*}
    Since $\varphi_*$ is a $1/r'$-fractional matching, $\varphi$ is a $1/(rr')$-fractional matching.
    Note that the weight of $\varphi$ is
    \begin{align*}
        \sum_{e \in H} \varphi(e) = \frac{1}{r}\sum_{e \in H} \sum_{f \in \pi^{-1}(e)} \varphi_*(f) = \frac{1}{r} \sum_{f \in H_*} \varphi_*(f) = \frac{1}{r} \mu |V(H_*)| = \mu |V(H)|. 
    \end{align*}

    If $\varphi_*$ only gives non-zero weight to edges in $\bigcup \cC_*$, then by the definition of $\varphi$, we have that $\varphi$ only gives non-zero weight to edges in $\bigcup \cC$. 
\end{proof}

The following proposition states that being `almost complete' is preserved under taking a blow-up. 

\begin{proposition}[{\cite[Proposition 3.2]{Lo2025}}]
\label{prop:H_*_is_dense}
Let $1/n \ll \eps, \alpha, 1/r, 1/k$ and $k \geq 2$. Let~$H$ be a $(1-\eps, \alpha)$-dense $k$-graph on~$n$ vertices and let~$H_*$ be an $r$-blow-up of~$H$. Then~$H_*$ is $(1- 2\eps, 2 \alpha)$-dense. 
\end{proposition}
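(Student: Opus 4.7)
Set $N \coloneqq rn = |V(H_*)|$ and extend the projection $\pi$ to vertices by sending every vertex of $V_x$ to $x$. Fix $i \in [k-1]$ and consider an arbitrary $S_* \in \binom{V(H_*)}{i}$; the natural case split is according to whether $S_*$ is \emph{rainbow}, meaning its $i$ vertices lie in $i$ distinct parts. If $S_*$ is not rainbow, then by construction of the blow-up no edge of $H_*$ can contain $S_*$, so $d_{H_*}(S_*) = 0$. If $S_*$ is rainbow and $S \coloneqq \pi(S_*)$ has $|S| = i$, then the edges of $H_*$ containing $S_*$ correspond bijectively to pairs consisting of an edge $e \in H$ with $S \subseteq e$ together with a choice of one vertex from each $V_x$ with $x \in e \setminus S$, so
\[
    d_{H_*}(S_*) \;=\; d_H(S)\cdot r^{k-i}.
\]
In particular, $d_{H_*}(S_*) = 0$ whenever $d_H(S) = 0$, so the ``gap'' part of the definition of $(\mu,\alpha)$-density will be automatic for $H_*$ in every case.

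The remaining content is then a pair of binomial-coefficient comparisons. For the density lower bound, whenever $S_*$ is rainbow and $S = \pi(S_*)$ is good in $H$ (so $d_H(S) \geq (1-\eps)\binom{n}{k-i}$), I would show
\[
    d_{H_*}(S_*) \;\geq\; (1-\eps)\binom{n}{k-i}\, r^{k-i} \;\geq\; (1-\eps)^2\binom{N}{k-i} \;\geq\; (1-2\eps)\binom{N}{k-i},
\]
where the middle inequality reduces via
\[
\frac{\binom{n}{k-i}\,r^{k-i}}{\binom{N}{k-i}} \;=\; \prod_{j=0}^{k-i-1}\frac{rn-jr}{rn-j}
\]
to the observation that each factor is $1 - O(k/(rn))$, and the hierarchy $1/n \ll \eps, 1/r, 1/k$ makes the product at least $1-\eps$.

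For the count of exceptional sets, I would split the bad $S_*$ (those with $d_{H_*}(S_*) = 0$) into non-rainbow sets, of which there are $\binom{N}{i} - \binom{n}{i}r^i$, and rainbow sets whose projection $S$ is bad in $H$, of which there are at most $\alpha\binom{n}{i}\,r^i$. An identical factorial estimate gives $\binom{n}{i}r^i \geq (1-\alpha)\binom{N}{i}$, so the total is at most
\[
    \binom{N}{i} - (1-\alpha)\binom{n}{i}r^i \;\leq\; \binom{N}{i}\bigl(1 - (1-\alpha)^2\bigr) \;\leq\; 2\alpha\binom{N}{i},
\]
matching the bound required by $(1-2\eps, 2\alpha)$-density. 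There is no conceptual obstacle; the only mild technical point is to track these falling-factorial ratios accurately enough that the factor-of-two slack in the hypotheses absorbs both the rainbow-versus-non-rainbow bookkeeping and the loss from passing to the blow-up.
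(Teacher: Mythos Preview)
Your argument is correct and is the natural, standard proof: split according to whether $S_*$ is rainbow, use the identity $d_{H_*}(S_*) = d_H(\pi(S_*))\,r^{k-i}$ in the rainbow case, and absorb the falling-factorial discrepancies into the slack $\eps \mapsto 2\eps$, $\alpha \mapsto 2\alpha$ via the hierarchy $1/n \ll \eps,\alpha,1/r,1/k$. There is nothing to compare against in the present paper, since \cref{prop:H_*_is_dense} is quoted from \cite[Proposition 3.2]{Lo2021} without proof; what you have written is essentially the argument one would find there.
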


\section{The Initial Monochromatic Tight Component}

In this section, we show that a $2$-edge-coloured `almost complete' $k$-graph $H$ has a monochromatic tight component which contains a matching of linear size. This will be the starting point from which we will then show that we can construct larger and larger matchings. 

\begin{lemma} \label{lem:large_MTC}
    Let $1/n \ll \eps \ll 1/k \leq 2$. Let $H$ be a $2$-edge-coloured $(1-\eps, \eps)$-dense $k$-graph on $n$ vertices. Then $H$ contains a monochromatic tight component $T$ that contains a matching of size at least $\frac{n}{k^2 15^k}$.
\end{lemma}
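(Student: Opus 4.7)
The plan is to reduce the problem to a structural covering claim about monochromatic tight components in $H$, from which a pigeonhole argument combined with a standard matching lower bound yields the desired matching.

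The central step is to prove (or invoke) the following claim: the edges of any $2$-edge-coloured $(1-\eps,\eps)$-dense $k$-graph on $n$ vertices can be covered by the union of at most $15^k$ monochromatic tight components. I would attempt this by induction on $k$. The base case $k=2$ is the classical fact that in any $2$-edge-coloured $K_n$, at least one of the colour classes is connected, so two monochromatic components suffice (well below $15^2$). For the induction step, I would look at the $(k-1)$-uniform link $L_v$ of a suitably chosen vertex $v$; this link inherits a density condition of the form $(1-O(\eps),O(\eps))$, and so by induction its edges split among $15^{k-1}$ monochromatic tight components. Each such $(k-1)$-uniform component corresponds to a tightly-connected family of edges of $H$ containing $v$ (any two edges share $v$, and their $L_v$-preimages are $(k-2)$-tight-connected). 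The tool \cref{lem:blue_tight_walk} would then control how distinct choices of $v$ can be glued so that the total number of $H$-components required grows by at most a factor of $15$ per layer. The constant $15$ is almost certainly not sharp, but since this lemma is only used as a starting point from which the matching is subsequently enlarged, any $C^k$-type bound would be enough.

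Given the covering claim, the rest is a short calculation. By $(1-\eps,\eps)$-density we have $|E(H)|\ge (1-\eps)\binom{n}{k}$, so pigeonhole yields a monochromatic tight component $T$ with $|T|\ge (1-\eps)\binom{n}{k}/15^k$. In any maximum matching $M$ of $T$, every edge of $T$ must intersect $V(M)$ (else $M$ could be extended), and each vertex lies in at most $\binom{n-1}{k-1}$ edges of $T$, so $|T|\le k|M|\binom{n-1}{k-1}$. Rearranging,
\[
    |M| \;\geq\; \frac{|T|}{k\binom{n-1}{k-1}} \;\geq\; \frac{(1-\eps)\,n}{k^2\cdot 15^k} \;\geq\; \frac{n}{2k^2\cdot 15^k},
\]
using $\eps\ll 1/k\le 1/2$.

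The main obstacle is the covering claim itself. The inductive route via links is natural but delicate: one must check that the passage from $L_v$-components to $H$-components is controlled uniformly, and that gluing across different choices of $v$ does not inflate the count by more than a constant factor per step. The key structural ingredient here is \cref{lem:blue_tight_walk}, which guarantees that monochromatic tight components in dense $k$-graphs do not fragment arbitrarily — any forced transition between two blue components of $H$ must pass through a shared red component, which is exactly the type of global alignment the induction needs to keep the number of tight components under control.
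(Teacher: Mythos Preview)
Your final deduction (a maximum matching in $T$ has size at least $|T|/(k\binom{n-1}{k-1})$) matches the paper exactly, but the step producing a large monochromatic tight component has a genuine gap. The covering claim --- that at most $15^k$ monochromatic tight components suffice to cover $E(H)$ --- is false already for $k=2$: colour $K_n$ so that red is a perfect matching and blue is its complement. Then blue is a single component, but red has $n/2$ components, and every red edge lies in its own red component, so you need $n/2+1$ monochromatic components to cover $E(K_n)$. Your stated base case (``two monochromatic components suffice'') conflates ``one colour class is connected'' with ``the other colour has a bounded number of components'', which it does not. Since the inductive scheme is anchored at a false base case and the link argument you sketch would at best propagate a bound on the number of components, the whole route collapses; \cref{lem:blue_tight_walk} does not help here, as it says nothing about how many components there are.

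The paper does something quite different and avoids counting components altogether. It proves directly (Proposition~4.2) that any $k$-graph with at least $\alpha\binom{n}{k}$ edges contains a single tight component with at least $(\alpha/5)^k\binom{n}{k}$ edges. The argument is via the Lov\'asz form of Kruskal--Katona: if every tight component were small, group them into $\ell$ buckets each of size roughly $\beta\binom{n}{k}$; distinct tight components have disjoint shadows, so Kruskal--Katona lower-bounds each bucket's shadow by $\binom{\beta^{1/k}n}{k-1}$, forcing $\ell\le 2\beta^{-(k-1)/k}$, which in turn caps the total number of edges below $\alpha\binom{n}{k}$, a contradiction. One then applies this with $\alpha=1/3$ to the denser colour class of $H$, obtaining $|T|\ge 15^{-k}\binom{n}{k}$, and your matching calculation finishes the proof. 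The point is that ``few components'' is simply not true, but ``one large component'' is, and Kruskal--Katona is the tool that delivers it.
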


We derive this from the fact that every $k$-graph which has a constant fraction of all possible edges, contains a tight component which still has a constant fraction of all possible edges.

\begin{proposition} \label{prop:large_TC}
    Let $1/n \ll \alpha, 1/k < 1$. Let $H$ be a $k$-graph on $n$ vertices. If $|H| \geq \alpha \binom{n}{k}$, then $H$ contains a tight component $T$ with $|T| \geq \left(\frac{\alpha}{5}\right)^k \binom{n}{k}$. 
\end{proposition}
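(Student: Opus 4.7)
The plan is to derive the bound from two standard facts: the disjointness of $(k-1)$-shadows of distinct tight components, and the Kruskal--Katona theorem. The whole argument is a clean piece of shadow-counting.

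First I would establish the shadow disjointness: if a $(k-1)$-set $S \in \binom{V(H)}{k-1}$ were contained in two edges $e_1, e_2 \in H$ lying in different tight components, then $|e_1 \cap e_2| \geq k-1$ would force $e_1, e_2$ to lie in a common tight component, a contradiction. Enumerating the tight components of $H$ as $T_1, T_2, \dots$ with $(k-1)$-shadows $\partial T_i \subseteq \binom{V(H)}{k-1}$, this gives $\sum_i |\partial T_i| \leq \binom{n}{k-1}$.

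Next I would apply the Lov\'asz real-parameter form of Kruskal--Katona: any $k$-uniform hypergraph with $m$ edges has $(k-1)$-shadow of size at least $k m^{(k-1)/k}/(k!)^{1/k}$. Summing this bound over the $T_i$ and combining with the disjointness constraint yields $\sum_i |T_i|^{(k-1)/k} \leq (k!)^{1/k}\binom{n}{k-1}/k$. Letting $M := \max_i |T_i|$ and using $|T_i|^{(k-1)/k} \geq |T_i|/M^{1/k}$ on the left side, the sum is at least $|H|/M^{1/k}$, and rearranging gives
\[
M \;\geq\; \left(\frac{k\,|H|}{(k!)^{1/k}\binom{n}{k-1}}\right)^{\!k}.
\]

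Substituting $|H| \geq \alpha\binom{n}{k}$ and the identity $\binom{n}{k-1} = \frac{k}{n-k+1}\binom{n}{k}$ simplifies the right-hand side to $\alpha^k (n-k+1)^k/k!$, so that $M/\binom{n}{k} \geq \alpha^k\bigl((n-k+1)/n\bigr)^k$. Since $1/n \ll 1/k$ gives $n \geq 5(k-1)$, we have $(n-k+1)/n \geq 4/5$, whence $M \geq (4\alpha/5)^k\binom{n}{k} \geq (\alpha/5)^k\binom{n}{k}$, as required. The main obstacle, such as it is, lies in spotting the shadow-disjointness observation, which is what converts the global density hypothesis on $H$ into a constraint distributed across the individual tight components; after that, Kruskal--Katona is the natural tool and the rest is routine averaging. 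The only care needed is invoking the real-parameter form of Kruskal--Katona so that the clean $m^{(k-1)/k}$ lower bound on shadows is valid for every integer edge count, not merely at binomial-coefficient values.
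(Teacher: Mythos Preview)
Your proof is correct and rests on the same two ingredients as the paper's argument: the disjointness of the $(k-1)$-shadows of distinct tight components, and the Lov\'asz form of Kruskal--Katona. The difference lies in how the two ingredients are combined. The paper argues by contradiction: assuming all tight components are small, it \emph{groups} them into roughly equal-sized chunks $H_1,\dots,H_\ell$, applies Kruskal--Katona to each chunk to upper-bound $\ell$, and then derives a contradiction from $\sum_i |H_i| \geq (\alpha-\beta)\binom{n}{k}$. Your argument is more direct: you apply Kruskal--Katona to each individual component and then use the one-line averaging inequality $|T_i|^{(k-1)/k} \geq |T_i|/M^{1/k}$ to convert the shadow constraint into a lower bound on $M = \max_i |T_i|$. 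Your route avoids the grouping step and the contradiction framing entirely, and in fact yields the slightly stronger bound $(4\alpha/5)^k\binom{n}{k}$; the paper's grouping is essentially a discretised version of the same averaging.
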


To prove this, it is convenient to introduce the notion of the `shadow' of a $k$-graph. 
For a $k$-graph $H$, we define its shadow $\partial H$ as the $(k-1)$-graph with $V(\partial H) = V(H)$ and $E(\partial H) = \{e \in \binom{V(H)}{k-1} \colon e \subseteq f \text{ for some } f \in E(H)\}$. We use the following theorem which is called the Lov\'asz version of the Kruskal-Katona theorem and gives a lower bound on the size of the shadow.

\begin{theorem}[Lov\'asz 1979 \cite{Lovasz1979}] \label{thm:Lovasz_Kruskal_Katona}
    Let $k \geq 2$ and let $H$ be a $k$-graph. If $|H| \geq \binom{x}{k}$ for some $x \geq 0$, then $|\partial H| \geq \binom{x}{k-1}$.\footnote{Here we define $\binom{x}{k} = \frac{1}{k!} \prod_{i =1}^k (x-i+1)$ for any $x \in \mathbb{R}$ and $k \in \mathbb{N}$.}
\end{theorem}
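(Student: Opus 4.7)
The plan is to follow Lovász's original inductive argument, which combines a shifting (compression) operation with induction on the uniformity $k$.

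The base case $k = 2$ is direct: a simple graph $H$ has $\partial H$ equal to its set of non-isolated vertices, so $|H| \leq \binom{|\partial H|}{2}$ combined with the hypothesis $|H| \geq \binom{x}{2}$ and monotonicity of $\binom{\cdot}{2}$ on $[1, \infty)$ yields $|\partial H| \geq x = \binom{x}{1}$.

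For the inductive step, I would first apply the standard $(i,j)$-compressions $C_{ij}$ (which replace each edge $e$ with $j \in e$, $i \notin e$, and $(e \setminus \{j\}) \cup \{i\} \notin H$ by $(e \setminus \{j\}) \cup \{i\}$). These preserve $|H|$ and do not increase $|\partial H|$, reducing to the case where $H$ is shifted. For a shifted $H$ and $v$ the maximal vertex, write $H_v = \{e : e \cup \{v\} \in H\}$ for the link (a $(k-1)$-graph) and $H - v$ for the deletion. One then has the decompositions $|H| = |H - v| + |H_v|$ and, crucially for shifted $H$, $|\partial H| = |\partial(H - v)| + |\partial H_v|$, the second identity using that every $(k-1)$-set through $v$ in $\partial H$ corresponds to a $(k-2)$-set in $\partial H_v$. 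I would then choose $y$ with $|H_v| = \binom{y}{k-1}$ and $x'$ with $|H - v| \geq \binom{x'}{k}$, apply the inductive hypothesis to the $(k-1)$-graph $H_v$ and to $H - v$ to get $|\partial H_v| \geq \binom{y}{k-2}$ and $|\partial(H - v)| \geq \binom{x'}{k-1}$, and combine these to obtain $|\partial H| \geq \binom{x'}{k-1} + \binom{y}{k-2}$.

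The main obstacle is the real-variable arithmetic needed to close the induction: one must show that $\binom{x'}{k} + \binom{y}{k-1} \geq \binom{x}{k}$ implies $\binom{x'}{k-1} + \binom{y}{k-2} \geq \binom{x}{k-1}$, with the tight case $x' = y = x - 1$ captured by the Pascal identity $\binom{x}{k} = \binom{x-1}{k} + \binom{x-1}{k-1}$. Because $x$ is not assumed integral, this cannot be verified by a discrete cascade argument and instead requires a convexity/monotonicity analysis of the polynomial $\binom{\cdot}{k}$. An arguably cleaner alternative would be to first prove the integer-valued Kruskal–Katona theorem via the same compression (so that $H$ reduces to an initial segment of the colex order, whose shadow is computed exactly), and then extend to real $x$ by a continuity argument on the leading term $a_k$ of the cascade representation.
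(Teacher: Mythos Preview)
The paper does not prove this theorem. It is quoted from Lov\'asz's book and invoked as a black box in the proof of \cref{prop:large_TC}; there is no argument in the paper to compare your proposal against.

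As for the proposal itself, the outline is the standard compression route, but it is not a proof because the step you flag as ``the main obstacle'' is in fact the entire content of the Lov\'asz form, and you have not carried it out. Worse, the bare implication you write down,
\[
\binom{x'}{k}+\binom{y}{k-1}\ge\binom{x}{k}\ \Longrightarrow\ \binom{x'}{k-1}+\binom{y}{k-2}\ge\binom{x}{k-1},
\]
is false without additional input from the combinatorics (e.g.\ $k=3$, $x=4$, $x'=2$, $y=4$ gives $0+6\ge 4$ but $1+4<6$). The standard argument does not attempt this implication; instead it uses the containment coming from shiftedness (with the minimal vertex~$1$ one has $\partial(H\!-\!1)\subseteq H_1$) together with a second induction on the number of vertices applied to $H-1$ to force $|H_1|\ge\binom{x-1}{k-1}$ directly, after which Pascal's identity finishes. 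Relatedly, your phrase ``apply the inductive hypothesis to \dots\ $H-v$'' is not induction on $k$, since $H-v$ is still $k$-uniform; you need to set up the double induction on $k$ and on $|V(H)|$ explicitly, which your sketch does not do.
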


We are now ready to prove \cref{prop:large_TC}.

\begin{proof}[Proof of \cref{prop:large_TC}]
    Let $\beta \coloneqq (\alpha/5)^k$.
    Suppose for a contradiction that all tight components in $H$ have size less than $\beta \binom{n}{k}$. Then we can partition $H = H_0 \dot\cup H_1 \dot\cup \dots \dot\cup H_\ell$ (allowing $H_0 = \varnothing$) such that $|H_0| < \beta \binom{n}{k}$, and for each $i \in [\ell]$, $H_i$ is a union of tight components of $H$ with $\beta \binom{n}{k} \leq |H_i| < 2\beta \binom{n}{k}$.\footnote{Indeed, such a partition can be obtained as follows. Start with each tight component as its own set. Then, as long as there are two sets of size less than $\beta \binom{n}{k}$, replace those two sets by their union.} Note that 
    \begin{align} \label{eq:shadows_disjoint}
        \partial H_i \cap \partial H_j = \varnothing \quad \text{ for all } \quad 1 \leq i < j \leq \ell
    \end{align} 
    since $H_i$ and $H_j$ consist of distinct tight components. Let $i \in [\ell]$. Note that $|H_i| \geq \beta \binom{n}{k} \geq \binom{\beta^{1/k}n}{k}$. By \cref{thm:Lovasz_Kruskal_Katona}, $|\partial H_i| \geq \binom{\beta^{1/k}n}{k-1}$. Hence, by \cref{eq:shadows_disjoint}, we have
    \begin{align*}
        \binom{n}{k-1} \geq \sum_{i \in [\ell]} |\partial H_i| \geq \ell \binom{\beta^{1/k}n}{k-1} \geq \frac{\ell \beta^{\frac{k-1}{k}}}{2} \binom{n}{k-1}. 
    \end{align*}
    Thus $\ell \leq 2 \beta^{-\frac{k-1}{k}}$. It follows that 
    \begin{align*}
        (\alpha - \beta)\binom{n}{k} < |H| - |H_0| = \sum_{i \in [\ell]} |H_i| \leq 2 \ell \beta \binom{n}{k} \leq 4 \beta^{1/k} \binom{n}{k} < (5\beta^{1/k} - \beta) \binom{n}{k}.
    \end{align*}
    Since $\beta = (\alpha/5)^k$, this is a contradiction.
\end{proof}

We can now derive \cref{lem:large_MTC} from \cref{prop:large_TC}.

\begin{proof}[Proof of \cref{lem:large_MTC}]
    Note that since $H$ is $(1-\eps, \eps)$-dense, we have 
    \begin{align*}
        |H| \geq \frac{1}{k}\sum_{S \in \binom{V(H)}{k-1}} d_H(S) \geq \frac{1}{k} (1-\eps)^2\binom{n}{k-1} n \geq (1-\eps)^2 \binom{n}{k}.
    \end{align*}
    Hence we may assume without loss of generality that 
    \begin{align*}
        |H^{\red}| \geq \frac{1}{2}(1-\eps)^2 \binom{n}{k} \geq \frac{1}{3} \binom{n}{k}.
    \end{align*}
    By \cref{prop:large_TC}, we have that $H^{\red}$ contains a tight component $T$ with $|T| \geq \frac{1}{15^k} \binom{n}{k}$. Since $T$ is a red tight component in $H$, it now suffices to show that $T$ contains a matching of size at least $\frac{n}{k^2 15^k}$. Let $M$ be a maximal matching in $T$. Since every edge of $T$ intersects at least one edge in $M$, we have
    \begin{align*}
        \frac{1}{15^k} \binom{n}{k} \leq |T| \leq |M| \, k \binom{n-1}{k-1}.
    \end{align*}
    It follows that $|M| \geq \frac{n}{k^2 15^k}$.
\end{proof}

\section{Larger and Larger Matchings}

In this section, we show how to find successively larger (fractional) matchings in a $2$-edge-coloured `almost complete' $k$-graph that are contained in monochromatic tight components.

We first show that given a matching $M$ in a red tight component $R$ (that is not yet large enough), we can either find a larger (fractional) matching in $R$ or we can find a blue tight component that contains a matching which is about as large as $M$ and intersects $M$ in a `simple' way. 

\begin{lemma} \label{lem:matching_inc_1}
    Let $1/n \ll \eps \ll \gamma \ll \eta \ll 1/k$ with $k \geq 3$. Let $H$ be a $2$-edge-coloured $(1-\eps, \eps)$-dense $k$-graph on $n$ vertices. Let $M$ be a matching in a red tight component $R$ of $H$ with $\eta n \leq |M| \leq (1 - \eta) n/k$. Then at least one of the following holds. 
    \begin{enumerate}[label = \upshape{(M\arabic*)}, leftmargin= \widthof{M100000}]
        \item There is a $1/k!$-fractional matching in $R$ of weight at least $|M| + \gamma n$. \label{condition_1}
        \item There is a matching $M'$ in a blue tight component of $H$ with 
        \[
            |M'| \geq \min(|M|, n - |V(M)|)  - \sqrt{\gamma} n
        \]
        such that each edge $e \in M'$ intersects exactly one edge $f(e) \in M$ and $|e \cap f(e)| = k-1$ and $H[e \cup f(e)] \cong K_{k+1}^{(k)}$. \label{condition_2}
    \end{enumerate}
    The same statement holds with colours reversed.
\end{lemma}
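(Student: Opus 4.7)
The plan is to produce a fractional matching in $R$ that is strictly larger than $M$ (yielding (M1)), and otherwise to extract from $M$ a blue matching of the required form sitting inside a single blue tight component (yielding (M2)). Set $W := V(H)\setminus V(M)$ and $s := \min(|M|, |W|)$. Since $H$ is $(1-\eps,\eps)$-dense, every $(k-1)$-subset $S$ of an edge $e \in M$ has $d_H(S) > 0$ and hence $d_H(S) \geq (1-\eps)n$. Thus for each $e \in M$ at most $k\eps n$ vertices $v$ fail the property $H[e \cup \{v\}] \cong K_{k+1}^{(k)}$, and a greedy argument produces an injection $\phi \colon M_0 \to W$ with $|M_0| \geq s - k\eps n$ such that the frame $C_e := H[e \cup \{\phi(e)\}]$ is isomorphic to $K_{k+1}^{(k)}$ for every $e \in M_0$. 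Any two edges of $C_e$ share $k-1$ vertices, so all red edges of $C_e$ lie in $R$ and any two blue edges of $C_e$ lie in a common blue tight component.

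Let $M_R := \{e \in M_0 \colon C_e \subseteq R\}$. If $|M_R| \geq \gamma k n$, then replacing each $e \in M_R$ in $M$ by the $1/k$-fractional matching placing weight $1/k$ on every edge of $C_e$ produces a $1/k$-fractional (hence $1/r$-fractional, as $r=k!$) matching in $R$ of weight at least $|M| + |M_R|/k \geq |M|+\gamma n$, giving (M1). Otherwise set $M_1 := M_0 \setminus M_R$, and for each $e \in M_1$ pick a blue edge $f_e \in C_e$; then $\phi(e) \in f_e$, $|e \cap f_e|=k-1$, and $H[e \cup f_e] = C_e \cong K_{k+1}^{(k)}$. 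The edges $f_e$ are pairwise disjoint, so they form a blue matching meeting $M$ as required in (M2). The only remaining issue is that they need not all lie in a single blue tight component.

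Let $B^*$ be a blue tight component of $H$ containing the most $f_e$'s and set $N := \{e \in M_1 \colon f_e \notin B^*\}$. If $|N| \leq \sqrt{\gamma}n$, then $M' := \{f_e \colon e \in M_1 \setminus N\}$ lies in $B^*$ and satisfies
\[
    |M'| \geq |M_0| - |M_R| - |N| \geq s - k\eps n - \gamma kn - \sqrt{\gamma}n \geq s - \sqrt{\gamma}n,
\]
yielding (M2). It remains to show that $|N| > \sqrt{\gamma}n$ forces (M1). Fix any $e_0 \in M_1 \setminus N$ (so $f_{e_0} \in B^*$). For each $e \in N$, pick a tight pseudo-walk $g_1 \dots g_t$ in $R$ with $g_1 = e$ and $g_t = e_0$. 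Suppose there is a tight pseudo-walk $Q'$ from $f_e$ to $f_{e_0}$ avoiding $R$; then appending $e_0, g_{t-1}, \dots, g_1, f_e$ to $Q'$ yields a closed tight pseudo-walk in which $f_e$ and $f_{e_0}$ lie in different blue tight components and the only red edges following $f_{e_0}$ lie in $R$. Applying \cref{lem:blue_tight_walk} to this closed walk then forces some red edge in the interior of $Q'$ to lie in $R$, contradicting our assumption. Thus every tight pseudo-walk from $f_e$ to $f_{e_0}$ in $H$ meets $R$. I would exploit this rigidity by, for each $e \in N$, choosing a short tight pseudo-walk from $f_e$ to $f_{e_0}$ running through a dedicated free vertex $w_e$; the forced red edge of $R$ on that walk then sits on $w_e$ and so is disjoint from the current matching support, and collecting these over all $e \in N$ augments the fractional matching in $R$ by weight at least $|N|/k \geq \sqrt{\gamma}n/k \geq \gamma n$, establishing (M1). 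The statement with colours reversed is proved by the same argument.

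The main obstacle is the last step: converting the structural rigidity ``every tight pseudo-walk from $f_e$ to $f_{e_0}$ meets $R$'' into concrete, pairwise-disjoint augmenting edges of $R$ that can be packed into one fractional matching without overloading any vertex. This is especially delicate in the regime $|M|$ close to $(1-\eta)n/k$ where free vertices are scarce, and may require careful selection of auxiliary walks together with the fresh vertices $w_e$, or passing to a blow-up (via \cref{prop:matching_to_fractional}) to work with an integral matching instead.
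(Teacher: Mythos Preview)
Your setup through the definition of $M_1$ and the choice of a blue edge $f_e$ in each frame $C_e$ is essentially the paper's, and your use of \cref{lem:blue_tight_walk} to deduce that every tight pseudo-walk from $f_e$ to $f_{e_0}$ must hit $R$ is correct. However, the last step --- which you yourself flag as the main obstacle --- is a genuine gap, and your proposed plan for it does not work.

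The problem is twofold. First, routing every $e\in N$ through a single fixed $e_0$ means that all your ``short tight pseudo-walks'' terminate in the $k$ vertices of $f_{e_0}$; nothing prevents the forced edge of $R$ from lying on the $f_{e_0}$-end of the walk, so the augmenting edges you collect may all overlap on $f_{e_0}$ and contribute total weight $O(1)$ rather than $|N|/k$. Second, ``running through a dedicated free vertex $w_e$'' does not force the $R$-edge to contain $w_e$: \cref{lem:blue_tight_walk} only guarantees \emph{some} red edge on the walk lies in $R$, with no control on its position. Passing to a blow-up does not help either, since the obstruction is spatial concentration near $f_{e_0}$, not fractionality.

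The paper resolves this by two ideas you are missing. It forms the auxiliary graph $G$ on $\{f_e\}$ with edges between blue edges in different blue components, and takes a \emph{matching} $M_G$ in $G$: either $|M_G|\ge k\gamma n$, giving many \emph{disjoint} pairs $(f_1,f_2)$, or $G$ has a large independent set, which directly gives \cref{condition_2}. Then for each pair it reserves $k-1$ fresh vertices $W_F$ and, crucially, builds not one walk but a whole family $P_{x_1,x_2}$ of walks indexed by $(x_1,x_2)\in f_1\times f_2$, arranged so that the forced $R$-edge $f_{x_1,x_2}$ avoids $\{x_1,x_2\}$. A short pigeonhole argument on where these edges land then yields a $1/r$-fractional matching of weight at least $2+1/k$ supported on $e_1\cup f_1\cup e_2\cup f_2\cup W_F$; since these sets are disjoint across pairs, the local fractional matchings combine. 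This averaging over $(x_1,x_2)$ is the key device that turns the structural rigidity into an honest fractional matching, and it has no analogue in your proposal.

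(A minor point: your inequality $s-k\eps n-\gamma kn-\sqrt{\gamma}n\ge s-\sqrt{\gamma}n$ is off by the lower-order terms; this is trivially fixed by taking a slightly smaller threshold for $|N|$, as the paper effectively does.)
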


The next lemma shows that if our matchings are not yet large enough and we are in the situation arising from \cref{condition_2}, then we can extend either the red matching or the blue matching to get a larger (fractional) matching.

\begin{lemma} \label{lem:matching_inc_2}
    Let $1/n \ll \eps \ll \delta \ll \eta \ll 1/k$ with $k \geq 3$. Let $H$ be a $2$-edge-coloured $(1-\eps, \eps)$-dense $k$-graph on $n$ vertices. Let $R$ and $B$ be a red and a blue tight component of $H$, respectively. Let $\eta n \leq t \leq \left(1 - \eta\right)\frac{n}{k+1}$. Let $M$ be a matching in $R$ and let $M'$ be a matching in $B$ with $|M| = |M'| = t$ such that there exists a bijection $g \colon M \rightarrow M'$ such that for each $f \in M$, $f \cap V(M') \subseteq g(f)$, $|f \cap g(f)| = k-1$, and $H[f \cup g(f)] \cong K_{k+1}^{(k)}$.
    Then there exists a $1/k!$-fractional matching in $R$ or in $B$ of weight at least $t + \delta n$.
\end{lemma}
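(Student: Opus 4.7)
My plan is to extend either $M$ or $M'$ to a $1/r$-fractional matching of weight at least $t + \delta n$ by carrying out $\Omega(\delta n)$ pairwise disjoint local augmentations, each contributing a gain of $\Omega(1/k)$. Set $W := V(H) \setminus (V(M) \cup V(M'))$, so $|W| = n - (k+1)t \ge \eta n$. The $(1-\eps,\eps)$-density of $H$ yields $\Omega(\eta n)$ pairwise disjoint edges inside $H[W]$, and by pigeonhole I may assume at least $s \ge \eta n/(4k)$ of them, call them $g_1,\dots,g_s$, are red (the blue case being symmetric). I pair each $g_i$ with a distinct tile $T_{j_i} = e_i \cup e'_i$ from the bijection $g$, where $e_i \in M \cap R$, $e'_i \in M' \cap B$, $|e_i \cap e'_i| = k-1$ and $H[e_i \cup e'_i] \cong K_{k+1}^{(k)}$; this is possible since $t \ge \eta n \ge s$. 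Note that every edge of $H[e_i \cup e'_i]$ shares $k-1$ vertices with $e_i$ or with $e'_i$, so all $k+1$ edges of $H[e_i \cup e'_i]$ lie in $R \cup B$.

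For each pair $(g_i, T_{j_i})$, take the tight pseudo-walk $P_i$ induced by listing the vertices of $e_i$ followed by those of $g_i$, whose edges all lie in $H[e_i \cup g_i]$. If $g_i \in R$, I add it to $M$ for a unit gain. Otherwise $P_i$ must contain a blue edge, because two consecutive red edges share $k-1$ vertices and hence lie in the same red tight component, so a monochromatic red sub-walk cannot bridge $e_i \in R$ and $g_i \notin R$. Let $f_i$ be the first blue edge on $P_i$; the edge just before $f_i$ on $P_i$ is then red and in $R$. If $f_i \notin B$, I close $e'_i, e_i, \dots, f_i$ into a closed tight pseudo-walk via a length-$2$ return through an intermediate edge $h_i$ chosen so that $|h_i \cap f_i| = |h_i \cap e'_i| = k-1$ (such $h_i$ exists by $(1-\eps,\eps)$-density for an appropriate choice of $y \in g_i \cap f_i$). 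Applying \cref{lem:blue_tight_walk} with $q_1 = e'_i \in B$ and $q_j = f_i \notin B$, and using that every red edge on the outgoing segment lies in $R$, forces a red edge on the return segment $\{h_i, e'_i\}$ to lie in $R$; since $e'_i \in B$ is not red, $h_i$ itself must be red and in $R$. Repeating this construction with the several distinct choices of $h_i$ available inside $f_i \cup e'_i$ and, if needed, varying the vertex $y \in g_i$, produces, by $(1-\eps,\eps)$-density, enough new edges of $R$ on $e_i \cup e'_i \cup \{y\}$ to construct a local $1/k$-fractional matching in $R$ on this vertex set of weight at least $1 + 1/k$; substituting this for the weight-$1$ contribution of $e_i \in M$ gives a net gain of $1/k$ in the $R$-fractional matching. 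In the complementary case $f_i \in B$, the symmetric construction generates instead blue edges of $B$ inside $e_i \cup e'_i \cup \{y\}$ and assembles them into a local $1/k$-fractional matching in $B$ of weight at least $1 + 1/k$, giving a $1/k$-gain on the $B$-side. By pigeonhole at least $s/2 \ge k\delta n$ of the pairs contribute to the same colour, so summing gives a $1/k$-fractional, hence $1/r$-fractional, matching in $R$ or in $B$ of weight at least $t + \delta n$, as required.

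The main obstacle I anticipate is the bootstrapping step: translating the individual new edges produced by \cref{lem:blue_tight_walk} into a local fractional matching of weight at least $1 + 1/k$ inside the same tight component on a $(k+2)$-vertex set. A single application of the lemma yields just one new edge in $R$ (or $B$), so I will need several carefully designed closed walks, and the required completing edges of $H$ must all be present simultaneously, which relies on $(1-\eps,\eps)$-density but has to be combined delicately with the tight-component information the lemma provides. A secondary concern is keeping the $s$ free edges and the $s$ tiles used in the augmentations pairwise disjoint; this is comfortable because $\eta \gg \delta$ provides a large buffer and a greedy selection suffices, and packaging the resulting local weights into a $1/r$-fractional matching on an appropriate blow-up is handled by \cref{prop:matching_to_fractional}.
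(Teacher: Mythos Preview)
Your overall architecture matches the paper's, but there is a genuine gap in the case $f_i \in B$. There, both $e'_i$ and $f_i$ lie in the \emph{same} blue tight component, so \cref{lem:blue_tight_walk} cannot be invoked: its hypothesis requires two edges in \emph{different} monochromatic components. Hence no ``symmetric construction'' is available, and a single blue edge $f_i$ together with $e'_i$ cannot be assembled into a fractional matching in $B$ of weight exceeding $1$ without control over their intersection. The paper's remedy is to run, from the outset, $k-1$ parallel tight pseudo-walks $P_1,\dots,P_{k-1}$ from $e_i$ to $g_i$: writing $e_i \cap e'_i = \{z_1,\dots,z_{k-1}\}$ and $\{y\} = e'_i \setminus e_i$, the walk $P_j$ is routed so that its first blue edge $g_j$ necessarily avoids both $z_j$ and $y$. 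If every $g_j$ lies in $B$, then $e'_i,g_1,\dots,g_{k-1}$ are edges of $B$ with empty common intersection, yielding a local fractional matching of weight $1+1/(k-1)$ with no appeal to \cref{lem:blue_tight_walk} whatsoever. Only when some $g_{j^*}\notin B$ does the lemma enter, and then it is applied once for each $v\in e_i$, via a return walk inside $e'_i\cup g_{j^*}$ that avoids $v$ except at its endpoints; this forces the resulting edges $f_v\in R$ to satisfy $v\notin f_v$, whence $\bigcap(\{e_i\}\cup\{f_v:v\in e_i\})=\varnothing$.

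Two smaller points also need attention. First, your length-$2$ return from $f_i$ to $e'_i$ through a single $h_i$ with $|h_i\cap f_i|=|h_i\cap e'_i|=k-1$ need not exist, since $f_i\subseteq e_i\cup g_i$ may share as few as one vertex with $e'_i$; the return walk must in general be longer. Second, for all these walks to live in $H$ one must arrange in advance, when pairing $g_i$ with its tile, that $H[g_i\cup e_i\cup e'_i]\cong K_{2k+1}^{(k)}$, so the local augmentation sits on $2k+1$ vertices rather than the $k+2$ you allot.
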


\subsection{\texorpdfstring{Proof of \cref{lem:matching_inc_1}}{Proof of the First Main Lemma}}

\begin{proof}[Proof of \cref{lem:matching_inc_1}]
    Let $W \coloneqq V(H) \setminus V(M)$. Since $|M| \leq (1 - \eta)n/k$, we have $|W| \geq \eta n$. Let $M^* \subseteq M$ with $|M^*| = \min(|M|, |W|)  - \gamma n$. Let $e \in M^*$ and note that, since $H$ is $(1-\eps, \eps)$-dense, we have
    \[
        \left|W \cap \bigcap_{S \in \binom{e}{k-1}} N_H(S) \right| \geq |W| - k \eps n \geq |M^*|.
    \]
    Hence we may choose distinct vertices $w_e \in W \cap \bigcap_{S \in \binom{e}{k-1}} N_H(S)$ for each $e \in M^*$. Let $M_0 \coloneqq \{e \in M^* \colon H^{\red}[e \cup w_e] \cong K^{(k)}_{k+1}\}$. Note that if $|M_0| \geq k \gamma n$, then we are done since the $1/k!$-fractional matching $\varphi \colon R \rightarrow [0,1]$ with $\varphi(e) = 1$ for each $e \in M \setminus M_0$, $\varphi(f) = 1/k$ for each $f \in \bigcup_{e \in M_0} H[e \cup w_e]$, and $\varphi(f) = 0$ otherwise has weight at least $|M| + \gamma n$ (and thus satisfies \cref{condition_1}). Thus we may assume $|M_0| \leq k \gamma n$. 
    
    Let $M^{**} \coloneqq M^* \setminus M_0$. Note that 
    \begin{align} \label{M^**_size}
    \eta n/2 \leq \min(|M|, |W|) - (k+1)\gamma n \leq |M^{**}| \leq |M^*| = \min(|M|, |W|) - \gamma n.
    \end{align}
    For each $e \in M^{**}$, choose a blue edge $f_e \in H^{\blue}[e \cup w_e]$ (which exists since $e \notin M_0$). Let $M_1 \coloneqq \{ f_e \colon e \in M^{**}\}$. Consider the auxiliary graph $G$ with $V(G) = M_1$ and for $f, f' \in M_1$, $ff' \in E(G)$ if and only if $f$ and $f'$ are in different blue tight components of~$H$.
    \begin{enumerate}[label=\textbf{Case \Alph*:}, ref=\Alph*, wide, labelwidth=0pt, labelindent=0pt]
        \item \textbf{\boldmath The graph $G$ contains a matching $M_G$ of size $k\gamma n$.\unboldmath} If follows from the fact that $H$ is $(1-\eps, \eps)$-dense and $|W| \geq \eta n$ that we can choose disjoint sets $W_F \in \binom{W}{k-1}$ for each $F = f_1f_2 \in M_G$ such that $H[f_1 \cup W_F] \cong H[f_2 \cup W_F] \cong K_{2k-1}^{(k)}$. 
        Let $F = f_1f_2 \in M_G$. We denote by $e_1$ and $e_2$ the unique edges in $M^*\setminus M_0$ such that $|e_1 \cap f_1| = |e_2 \cap f_2| = k-1$. 
        \begin{claim} \label{claim:R_small_fractional}
            There exists a $1/k!$-fractional matching $\varphi_F \colon R[e_1 \cup f_1 \cup e_2 \cup f_2 \cup W_F] \rightarrow [0,1]$ of weight at least $2 + 1/k$.
        \end{claim}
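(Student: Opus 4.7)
The plan is to apply \cref{lem:blue_tight_walk} to a closed tight pseudo-walk $Q$ in $H$ that includes $e_1, e_2, f_1, f_2$ as edges and then use the red edge of $R$ produced by that lemma together with $e_1$ and $e_2$ to build the required fractional matching. Write $e_1 = x_1 y_1 \cdots y_{k-1}$, $f_1 = w_1 y_1 \cdots y_{k-1}$, $e_2 = x_2 y'_1 \cdots y'_{k-1}$, $f_2 = y'_1 \cdots y'_{k-1} w_2$, and $W_F = \{u_1, \ldots, u_{k-1}\}$. I first choose a tight pseudo-walk $P$ in $R$ from $e_1$ to $e_2$, which exists since $e_1$ and $e_2$ both lie in the tight component $R$, and take $Q$ to be the closed tight pseudo-walk induced by the cyclic vertex sequence
\[
y_1, \ldots, y_{k-1}, x_1, P, x_2, y'_1, \ldots, y'_{k-1}, w_2, u_1, \ldots, u_{k-1}, w_1.
\]
Every edge of $Q$ lies in $H$: the single-step transitions $e_i \leftrightarrow f_i$ are valid because $|e_i \cap f_i| = k-1$, and the transitions through $W_F$ use only edges of $H[f_2 \cup W_F]$ and $H[W_F \cup f_1]$, which are complete $k$-graphs.

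Since $f_1$ and $f_2$ lie in different blue tight components, applying \cref{lem:blue_tight_walk} to $Q$ produces red edges $r_1$ and $r_2$ on the two sides of $\{f_1, f_2\}$ in $Q$ lying in a common red tight component. The side containing $P$ consists entirely of edges of $R$, so $r_1 \in R$ and hence $r_2 \in R$. By construction $r_2 \subseteq f_1 \cup f_2 \cup W_F$, and $r_2$ is one of the $2k-2$ edges of $Q$ strictly between $f_2$ and $f_1$ on the return side; each such edge either shares $k-1-j$ vertices with $e_2 \cap f_2$ (for some $0 \le j \le k-1$) or $j$ vertices with $e_1 \cap f_1$, with the two ``middle'' edges of the return portion being $\{w_2\} \cup W_F$ and $\{w_1\} \cup W_F$.

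I then split into cases according to how $r_2$ intersects $e_1 \cup e_2$. If $r_2 \cap (e_1 \cup e_2) = \varnothing$, then $r_2 \subseteq \{w_1, w_2\} \cup W_F$ and $\{e_1, e_2, r_2\}$ is an honest matching of three pairwise-disjoint edges of $R$; its indicator function is a $1/r$-fractional matching in $R$ of weight $3 \ge 2 + 1/k$, as required. Otherwise $r_2$ meets $e_1 \cap f_1$ or $e_2 \cap f_2$ in between $1$ and $k-2$ vertices. To handle this, I plan to rerun the walk argument with different cyclic orderings of $u_1, \ldots, u_{k-1}$ (and, if necessary, detours that pass through $W_F$ more than once) to produce additional red edges of $R$ on the various return sides. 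I expect that either (i) some variant produces a red edge of $R$ disjoint from $e_1 \cup e_2$, reducing to the easy case, or (ii) the accumulated red edges of $R$ saturate some $H[e_i \cup \{v\}]$ (for $i \in \{1,2\}$ and $v \in W_F \cup \{w_{3-i}\}$) with all $k+1$ of its edges red and hence in $R$, allowing me to redistribute the weight at $e_i$ as $1/k$ on each of those $k+1$ edges of $R$, for total weight $1 + 1/k$ on that side while keeping $e_{3-i}$ at weight $1$. The main obstacle is this second case: when $r_2$ meets $e_1 \cup e_2$, combining several applications of \cref{lem:blue_tight_walk} with careful bookkeeping of vertex overlaps to produce a valid $1/r$-fractional matching of weight exactly $2 + 1/k$ will be the delicate step.
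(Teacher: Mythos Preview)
Your first case (when $r_2$ is disjoint from $e_1\cup e_2$) is fine, but the ``otherwise'' case has a real gap, and neither of your plans (i), (ii) closes it. Reordering $u_1,\dots,u_{k-1}$ only permutes which vertices of $W_F$ appear in the intermediate edges of the return segment; it does \emph{not} let you control which vertices of $f_1$ or $f_2$ the red edge $r_2$ contains. So running the walk again with a different $W_F$-ordering may hand you exactly the same $r_2$, or another red edge with the same bad intersection pattern with $e_1\cup e_2$. Plan (ii) is worse: \cref{lem:blue_tight_walk} only yields one red edge of $R$ per application, somewhere on the return segment, and there is no mechanism that would force all $k+1$ edges of any $H[e_i\cup\{v\}]$ to be red, let alone in $R$.

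The missing idea is to vary the vertices of $f_1$ and $f_2$ placed at the \emph{ends} of the walk rather than the ordering of $W_F$. In the paper's argument, for each pair $(x_1,x_2)\in f_1\times f_2$ one takes the tight pseudo-walk induced by
\[
x_1,\ (f_1\setminus x_1),\ W_F,\ (f_2\setminus x_2),\ x_2,
\]
so that every internal edge (and hence the red edge $f_{x_1,x_2}\in R$ produced by \cref{lem:blue_tight_walk} after closing up through $R$) avoids both $x_1$ and $x_2$. Each $f_{x_1,x_2}$ meets exactly one of $f_1,f_2$. If for some fixed $x_1$ all the edges $E_{x_1}=\{f_{x_1,x_2}:x_2\in f_2\}$ meet $f_2$, then $\bigcap(\{e_2\}\cup E_{x_1})=\varnothing$ (the vertex $x_2$ is absent from $f_{x_1,x_2}$, and $e_2\setminus f_2$ is absent from every $f_{x_1,x_2}$), so weight $1/|E_{x_1}|$ on each edge of $\{e_2\}\cup E_{x_1}$ together with weight $1$ on $e_1$ gives a $1/r$-fractional matching of weight $2+1/|E_{x_1}|\ge 2+1/k$. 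Otherwise, for every $x_1\in f_1$ some $f_{x_1,x_2(x_1)}$ meets $f_1$, and the symmetric construction with $\{e_1\}\cup\{f_{x_1,x_2(x_1)}:x_1\in f_1\}$ works. In short: put the vertex you want excluded at the endpoint of the walk, then range over all such choices to force empty common intersection.
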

        \begin{proofclaim}
            Let $W_F = \{w_1, \dots, w_{k-1}\}$. Let $(x_1, x_2) \in f_1 \times f_2$ and fix arbitrary orders $f_1 \setminus x_1 = u_1 \dots u_{k-1}$ and $f_2 \setminus x_2 = v_1 \dots v_{k-1}$. Let $P_{x_1, x_2}$ be the tight pseudo-walk induced by the vertex sequence $x_1 u_1 \dots u_{k-1} w_1 \dots w_{k-1} v_1 \dots v_{k-1}x_2$ (note that $P_{x_1,x_2}$ starts in $f_1$ and ends in $f_2$). Observe that since $e_1, e_2 \in R$, there exists a tight pseudo-walk $P^*$ in $R$ from $e_2$ to $e_1$. Since $f_1$ and $f_2$ are in different blue tight components, by applying \cref{lem:blue_tight_walk} to the closed tight pseudo-walk obtained by concatenating $P_{x_1, x_2}$ and $P^*$, we deduce that $P_{x_1, x_2}$ contains an edge $f_{x_1, x_2} \in R$. 
            Note that, by the way $P_{x_1, x_2}$ is constructed, for each $(x_1, x_2) \in f_1 \times f_2$, the edge $f_{x_1, x_2}$ intersects either $f_1$ or $f_2$ but not both and moreover 
            \begin{align} \label{f_nonintersect}
                f_{x_1, x_2} \cap \{x_1, x_2\} = \varnothing.
            \end{align}
            Let $x_1 \in f_1$ and consider the set of edges $E_{x_1} = \{f_{x_1, x_2} \colon x_2 \in f_2\}$. If all edges in $E_{x_1}$ intersect $f_2$, then we obtain the desired $1/k!$-fractional matching $\varphi_F$ by setting $\varphi_F(e_1) = 1$, $\varphi_F(f) = 1/|E_{x_1}|$ for all $f \in \{e_2\} \cup E_{x_1}$, and $\varphi_F(f) = 0$ otherwise.
            Note that by \cref{f_nonintersect}, we have $\bigcap (\{f_2\} \cup E_{x_1}) = \varnothing$. Since $e_2 \cap V(P_{x_1,x_2}) \subseteq f_2$ for each $(x_1, x_2) \in f_1 \times f_2$, we obtain $\bigcap (\{e_2\} \cup E_{x_1}) = \varnothing$.
            This implies that $\varphi_F$ is indeed a fractional matching as no vertex is contained in all $|E_{x_1}|+1$ edges in $\{e_2\} \cup E_{x_1}$.
            Hence we may assume that for every $x_1 \in f_1$ there exists $\sigma(x_1) \in f_2$ such that $f_{x_1, \sigma(x_1)}$ intersects $f_1$ (and thus does not intersect $f_2$). Let $E = \{f_{x_1, \sigma(x_1)} \colon x_1 \in f_1\}$. We obtain the desired $1/k!$-fractional matching $\varphi_F$ by setting $\varphi_F(e_2) =1$, $\varphi_F(f) = 1/|E|$ for each $f \in \{e_1\} \cup E$ and $\varphi_F(f) = 0$ otherwise (noting that $\bigcap (\{e_1\} \cup E) = \varnothing$ by \cref{f_nonintersect}).
        \end{proofclaim}
        Now let $\widetilde{M} \coloneqq (M\setminus M^{**}) \cup \{e \in M^{**} \colon f_e \notin V(M_G)\}$. Let $\varphi_0$ be the fractional matching induced by the matching $\widetilde{M}$. Now the completion of $\varphi_0 + \sum_{F \in M_G} \varphi_F$ with respect to $R$ is a $1/k!$-fractional matching in $R$ of weight at least $|M| + |M_G|/k = |M| + \gamma n$ (and thus satisfies \cref{condition_1}).

        \item \textbf{\boldmath The graph $G$ does not contain a matching of size $k\gamma n$.\unboldmath}

        It follows that $G$ contains an independent set $M'$ of size at least $|V(G)| - 2k\gamma n$. Note that $M' \subseteq M_1$ is a matching in $H^{\blue}$ and since it is an independent set in $G$ it is contained in a blue tight component $B$ of $H$ (by the definition of $G$). Observe that $|M'| \geq |V(G)| - 2k\gamma n = |M^{**}| - 2k\gamma n \geq \min(|M|, |W|) - (3k +1) \gamma n \geq \min(|M|, n - |V(M)|) -\sqrt{\gamma}n$ by \cref{M^**_size} and the definition of $W$. 
        Hence we are done since $M'$ satisfies \cref{condition_2}.\qedhere
    \end{enumerate}
\end{proof}

\subsection{\texorpdfstring{Proof of \cref{lem:matching_inc_2}}{Proof of the Second Main Lemma}}

\begin{proof}[Proof of \cref{lem:matching_inc_2}]
    Let $W \coloneqq V(H) \setminus (V(M) \cup V(M'))$ and note that $|W| \geq n - (k+1)t \geq \eta n$. Since $H$ is $(1-\eps, \eps)$-dense, it follows that there exists a matching $M_0$ in $H[W]$ of size $8k\delta n$ and an injection $f \colon M_0 \rightarrow M$ such that for each $e \in M_0$, $H[e \cup f(e) \cup g(f(e))] \cong K_{2k+1}^{(k)}$. 
    Indeed, it is easy to see that for an edge $f \in M$, we can greedily choose vertices $w_1, \dots, w_k \in W$ one by one such that for each $i \in [k]$, $H[w_1 \dots w_i \cup f \cup g(f)] \cong K_{k+1+i}^{(k)}$ (so that we can then set $f(e) = f$ for $e = w_1 \dots w_k$). 
    Doing this for $8k\delta n$ many different edges in $M$ (using new vertices in $W$ each time) gives the desired matching $M_0$ in $H[W]$ and the desired injection $f \colon M_0 \rightarrow M$.
    Let $M_1 \subseteq M_0$ be a monochromatic matching of size $4k\delta n$. Assume without loss of generality that $M_1$ is red. If there exists $M_2 \subseteq M_1$ such that $M_2 \subseteq R$ and $|M_2| = 2k\delta n$, then we are done since the $1/k!$-fractional matching induced by $M_2 \cup M$ has weight $t + 2k\delta n \geq t + \delta n$. Hence we may assume that there exists $M_3 \subseteq M_1$ with $|M_3| = 2k\delta n$ such that no edge of $M_3$ is in $R$. 
    For $e \in M_3$, recall that $g(f(e)) \in M' \subseteq B$ is the unique edge that intersects $f(e) \in M \subseteq R$. 
    Let $e \in M_3$.
    We fix arbitrary orders $e = w_1(e) \dots w_k(e) \eqqcolon w_1 \dots w_k$ and $f(e) \cap g(f(e)) = z_1(e) \dots z_{k-1}(e) \eqqcolon z_1 \dots z_{k-1}$. Let $x \coloneqq x(e)$ be the unique vertex in $f(e) \setminus g(f(e))$ and let $y \coloneqq y(e)$ be the unique vertex in $g(f(e)) \setminus f(e)$. For each $i \in [k-1]$, consider the tight pseudo-walk $P_i(e)$ induced by the vertex sequence $z_ixz_1 \dots z_{i-1}z_{i+1} \dots z_{k-1} w_1 \dots w_k$. Since $e =w_1 \dots w_k$ and $f(e) = xz_1\dots z_{k-1}$ are in different red tight components of $H$, $P_i(e)$ contains a blue edge. Let $g_i(e)$ be the first blue edge on $P_i(e)$. For each $e \in M_3$, we have either
    \begin{enumerate}
        \item $g_i(e) \in B$ for all $i \in [k-1]$ or \label{e_property_1}
        \item $g_{i^*}(e) \notin B$ for some $i^* \in [k-1]$. \label{e_property_2}
    \end{enumerate}
    Let $M_4 \subseteq M_3$ with $|M_4| = k\delta n$ be such that every $e \in M_4$ satisfies \cref{e_property_1} or every $e \in M_4$ satisfies \cref{e_property_2}.
    \begin{enumerate}[label=\textbf{Case \Alph*:}, ref=\Alph*, wide, labelwidth=0pt, labelindent=0pt]
    \item \textbf{\boldmath Every $e \in M_4$ satisfies \cref{e_property_1}.\unboldmath} Let $e \in M_4$ and let $F_e = \{g_i(e) \colon i \in [k-1]\} \subseteq B$ and note that $\bigcap (\{g(f(e))\} \cup F_e) = \varnothing$ since $g_i(e) \cap \{y(e), z_i(e)\} = \varnothing$ for all $i \in [k-1]$. Let $\varphi_e \colon B[e \cup f(e) \cup g(f(e))] \rightarrow [0,1]$ be the $1/k!$-fractional matching with $\varphi_e(h) = 1/|F_e|$ for each $h \in \{g(f(e))\} \cup F_e$ and $\varphi_e(h) = 0$ otherwise. Note that $\varphi_e$ has weight at least $1 + 1/(k-1)$. Let $\varphi_0$ be the $1/k!$-fractional matching induced by the matching $M' \setminus \{g(f(e)) \colon e \in M_4\}$. Note that the completion of $\varphi_0 + \sum_{e \in M_4} \varphi_e$ with respect to $B$ is a $1/k!$-fractional matching in $B$ of weight at least $t + k\delta n /(k-1) \geq t + \delta n$, so we are done.

    \item \textbf{\boldmath Every $e \in M_4$ satisfies \cref{e_property_2}.\unboldmath} Let $e \in M_4$ and let $i^* \in [k-1]$ be such that $g_{i^*}(e) \notin B$. Let $P^*$ be the tight pseudo-walk starting in $g(f(e))$ and then following $P_i(e)$ from $f(e)$ until $g_{i^*}(e)$. 
    Note that $V(P^*) \subseteq e \cup f(e) \cup g(f(e))$ and all edges of $P^*$ except its endpoints $g(f(e))$ and $g_{i^*}(e)$ are in $R$ (since they are all red by the definition of $g_{i^*}(e)$ and one of them is $f(e) \in R$). 
    Moreover, $g(f(e))$ and $g_{i^*}(e)$ are in different blue tight components of $H$. 
    For each $v \in f(e)$, construct a tight pseudo-walk $P_v$ from $g_{i^*}(e)$ to $g(f(e))$ such that $V(P_v) \subseteq g(f(e)) \cup g_{i^*}(e)$ and no edge of $P_v$ except $g(f(e))$ or $g_{i^*}(e)$ contains $v$.\footnote{It is easy to see that $P_v$ exists. Let $g(f(e)) \cap g_{i^*}(e) = u_1 \dots u_s$, $g_{i^*}(e) \setminus g(f(e)) = x_1 \dots x_{k-s}$, and $g(f(e)) \setminus g_{i^*}(e) = y_1 \dots y_{k-s}$. If $v = u_j$ for some $j \in [s]$, then we may take $P_v$ to be the tight pseudo-walk induced by the vertex sequence $vx_1 \dots x_{k-s-1}u_1 \dots u_{j-1} u_{j+1} \dots u_s y_1 \dots y_{k-s-1}v$. And if $v=x_j$ for some $j \in [k-s]$, then we may take $P_v$ to be the tight pseudo-walk induced by the vertex sequence $vx_1 \dots x_{j-1}x_{j+1}\dots x_{k-s}u_1 \dots u_s y_1 \dots y_{k-s}$. Finally, if $v = y_j$ for some $j \in [k-s]$, then we may take $P_v$ to be tight pseudo-walk induced by the vertex sequence $x_1 \dots x_{k-s}u_1 \dots u_s y_1 \dots y_{j-1} y_{j+1} \dots y_{k-s}v$.}
    By applying \cref{lem:blue_tight_walk} to the closed tight pseudo-walk obtained by the concatenation of $P^*$ and $P_v$, we deduce that $P_v$ contains an edge $f_v \in R$. 
    Let $F = \{f_v \colon v \in f(e)\}$ and note that $\bigcap (F \cup \{f(e)\}) = \varnothing$ since $v \notin f_v$ for each $v \in f(e)$. 
    Let $\varphi_e \colon R[e \cup f(e) \cup g(f(e))] \rightarrow [0,1]$ be the $1/k!$-fractional matching with $\varphi_e(h) = 1/|F|$ for each $h \in \{f(e)\} \cup F$ and $\varphi_e(h) = 0$ otherwise. 
    Note that $\varphi_e$ has weight at least $1 + 1/k$. Let $\varphi_0$ be the $1/k!$-fractional matching induced by the matching $M \setminus \{f(e) \colon e \in M_4\}$. 
    Now observe that the completion of $\varphi_0 + \sum_{e \in M_4} \varphi_e$ with respect to $R$ is a $1/k!$-fractional matching in $R$ of weight at least $t+ |M_4|/k = t + \delta n$, so we are done. \qedhere
    \end{enumerate}
\end{proof}

\subsection{\texorpdfstring{Combining \cref{lem:matching_inc_1} and \cref{lem:matching_inc_2}}{Combining the Main Lemmas}}

Finally, we combine \cref{lem:matching_inc_1} and \cref{lem:matching_inc_2} to derive the following lemma which states that any $(1-\eps, \eps)$-dense $2$-edge-coloured $k$-graph $H$ on $n$ vertices contains the desired fractional matchings. This lemma together with \cref{cor:matching_to_cycle} (a hypergraph version of {\L}uczak's connected matching method) then implies \cref{thm:k_unif_ramsey} and \cref{thm:k_unif_Lehel}. 

\begin{lemma} \label{lem:frac_matchings_exist}
    Let $1/n \ll \eps \ll \beta \ll \eta \ll 1/k$. Let $H$ be a $(1-\eps, \eps)$-dense $2$-edge-coloured $k$-graph on $n$ vertices. Then there exist fractional matchings $\varphi_1$ and $\varphi_2$ in $H$ and a red tight component $R$ and a blue tight component $B$ of $H$ such that the following hold.
    \begin{enumerate}[label = \upshape{(\roman*)}]
        \item $  \varphi_1$ has weight at least $(1-3\eta)n/(k+1)$ and $\varphi_2$ has weight at least $(1-3\eta)n/k$.
        \item All edges of non-zero weight for both $\varphi_1$ and $\varphi_2$ have weight at least $\beta$.
        \item We have $\{e \in H \colon \varphi_1(e) >0\} \subseteq R$ or $\{e \in H \colon \varphi_1(e) >0\} \subseteq B$. 
        \item We have $\{e \in H \colon \varphi_2(e) >0\} \subseteq R \cup B$.
    \end{enumerate}
\end{lemma}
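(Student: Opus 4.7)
The plan is to start with an initial matching produced by \cref{lem:large_MTC} and then iteratively enlarge it by alternating applications of \cref{lem:matching_inc_1} and \cref{lem:matching_inc_2}, passing to a blow-up of $H$ at each step so that the $1/r$-fractional matchings output by these lemmas can be converted back to integer matchings via \cref{prop:matching_to_fractional} and fed into the next iteration. To begin, \cref{lem:large_MTC} supplies a matching $M_0$ of size at least $n/(2k^2 15^k) \geq \eta n$ in some monochromatic tight component, which I assume without loss of generality to be red and call $R$.

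At each iteration $i$, I maintain a blow-up $H^{(i)}$ of $H$ (which by \cref{prop:H_*_is_dense} remains $(1-2^i\eps, 2^i\eps)$-dense, and since only a constant number of iterations will be required this stays well within the hypotheses of both \cref{lem:matching_inc_1} and \cref{lem:matching_inc_2}), an integer matching $M_i$ in a monochromatic tight component $T_i$ of $H^{(i)}$, and the density ratio $\rho_i \coloneqq |M_i|/|V(H^{(i)})|$. While $\rho_i \leq (1-\eta)/k$, I apply \cref{lem:matching_inc_1} to $H^{(i)}$. If conclusion (M1) holds, I convert the resulting $1/r$-fractional matching to an integer matching in a further $r$-blow-up via \cref{prop:matching_to_fractional}, gaining $\rho_{i+1} \geq \rho_i + \gamma$. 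If (M2) holds while $\rho_i < (1-\eta)/(k+1)$, I trim so $|M_i| = |M'|$, apply \cref{lem:matching_inc_2} to obtain a $1/r$-fractional matching in $R$ or in the opposite-colour component $B$ of larger weight, and again pass to a blow-up, gaining $\rho_{i+1} \geq \rho_i + \delta - \sqrt{\gamma}$. Provided $\gamma \ll \delta^2$ both gains are positive, so the iteration terminates after $O(1/\delta)$ steps in one of two terminal states: either (S1) $\rho_i \geq (1-\eta)/k$ in a single monochromatic tight component, or (S2) $\rho_i \geq (1-\eta)/(k+1)$ and the final call of \cref{lem:matching_inc_1} returns (M2), producing a matching $M'$ in an opposite-colour tight component $B$ with the bijective $(k-1)$-intersection structure required by (M2).

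In (S1), I take $\varphi_1 = \varphi_2$ to be the fractional matching in $H$ obtained by pushing $M_i$ back down via \cref{prop:matching_to_fractional}, of weight at least $(1-\eta)n/k$ and supported in a single tight component. In (S2), I take $\varphi_1$ to be induced by $M_i$ alone, of weight at least $(1-\eta)n/(k+1)$. For $\varphi_2$ in (S2), I use the key observation that for each pair $(e, g(e))$ coming from the bijection $g \colon M_i \to M'$ of (M2), the subgraph $H^{(i)}[e \cup g(e)] \cong K_{k+1}^{(k)}$ contains both a red edge of $R$ (namely $e$) and a blue edge of $B$ (namely $g(e)$); since any two edges of $K_{k+1}^{(k)}$ share $k-1$ vertices, every edge of $H^{(i)}[e \cup g(e)]$ must lie in $R \cup B$. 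Assigning weight $1/k$ to each of the $k+1$ edges of every such $H^{(i)}[e \cup g(e)]$ and weight $1$ to the unpaired edges of $M_i$ yields a fractional matching of weight at least $|M_i| + |M'|/k \geq (k+1)|M_i|/k - \sqrt{\gamma}|V(H^{(i)})|/k$; pushing down to $H$ via \cref{prop:matching_to_fractional} gives $\varphi_2$ of weight at least $(1-\eta-\sqrt{\gamma})n/k \geq (1-3\eta)n/k$. All non-zero weights are multiples of $1/r^{O(1/\delta)}$, a constant depending only on $k$ and $\eta$, so condition (ii) holds for sufficiently small $\beta$. The main obstacle is cleanly managing the hierarchy $\gamma \ll \delta^2 \ll \eta$ and ensuring that the bounded iteration count keeps the density deterioration from \cref{prop:H_*_is_dense} under control; once this is arranged, the construction is a direct assembly of the already-established lemmas.
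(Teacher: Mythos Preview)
Your proposal is correct and follows essentially the same approach as the paper: iterate \cref{lem:matching_inc_1} and \cref{lem:matching_inc_2} through blow-ups until a large enough matching is reached, then build $\varphi_2$ by spreading weight $1/k$ over the $(k+1)$-cliques $H[e\cup g(e)]$, using that each such clique lies inside $R\cup B$. The paper packages the same iteration into a maximality argument (take the largest $L$ such that the $r^L$-blow-up contains a matching of density $\tfrac{1}{2k^215^k}+L\gamma$ in a monochromatic tight component, then apply \cref{lem:matching_inc_1} once and derive a contradiction or land in the terminal case), which avoids tracking the iteration explicitly and lets a single application of \cref{prop:H_*_is_dense} control the density; your explicit loop is equivalent but incurs a $2^{O(1/\gamma)}$ density loss, which is still absorbed by the hierarchy.

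Two small slips worth fixing: the iteration terminates after $O(1/\gamma)$ steps, not $O(1/\delta)$, since the gain from \ref{condition_1} is only $\gamma$; and in (S2) your bound $|M'|\ge |M_i|-\sqrt{\gamma}\,|V(H^{(i)})|$ uses the wrong branch of the $\min$ once $\rho_i>1/(k+1)$. The conclusion $|M_i|+|M'|/k\ge (1-3\eta)|V(H^{(i)})|/k$ is nevertheless correct in the full range $(1-\eta)/(k+1)\le \rho_i\le (1-\eta)/k$ via the two-case computation the paper gives (when the $\min$ equals $|V(H^{(i)})|-k|M_i|$ the bound becomes $|V(H^{(i)})|/k-\sqrt{\gamma}|V(H^{(i)})|/k$, which is even better).
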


Before proceeding with the details, we give a brief sketch of the proof.
We let $M_*$ be a matching in a monochromatic tight component of a blow-up $H_*$ of $H$ such that $M_*$ covers the largest fraction of the vertices possible. If $M_*$  is an almost perfect matching, then we are done immediately as we get the desired fractional matchings by converting $M_*$ into an almost perfect fractional matching in $H$ using \cref{prop:matching_to_fractional}. Thus we may assume that $M_*$ is not almost perfect. This allows us to apply \cref{lem:matching_inc_1}.
If \cref{event_red_matching} holds, we get a contradiction to the optimality of $M_*$ as we get a larger fractional matching in a monochromatic tight component which we convert into an integral matching in a blow-up by applying \cref{prop:matching_to_fractional}. Thus \cref{event_blue_matching} must hold. If $M_*$ does not cover at least roughly a $\frac{k}{k+1}$-fraction of the vertices, we now get a contradiction by also applying \cref{lem:matching_inc_2}. Thus $M_*$ covers a large enough fraction of the vertices to obtain the fractional matching needed for \cref{thm:k_unif_ramsey}. To also obtain the fractional matching needed for \cref{thm:k_unif_Lehel}, we then need to slightly modify the matching using the properties given by \cref{event_blue_matching}.

\begin{proof}[Proof of \cref{lem:frac_matchings_exist}]
    Let $r = k!$. Let $\gamma$ and $\delta$ be new constants with $\beta \ll \gamma \ll \delta \ll \eta \ll 1/k$.
    Let $L$ be the largest integer such that the $r^L$-blow-up $H_*$ of $H$ contains a matching $M_*$ contained in a monochromatic tight component $T_*$ of $H_*$ with $|M_*| \geq (\frac{1}{k^2 15^k} + L\gamma)|V(H_*)|$. Note that trivially $L  \leq \frac{1}{\gamma}\left(\frac{1}{k} - \frac{1}{k^2 15^k}\right) \leq 1/(k \gamma)$. By \cref{lem:large_MTC}, we have that $L \geq 0$. 

    If $L > \frac{1}{\gamma} (\frac{1-2\eta}{k} - \frac{1}{k^2 15^k})$, then $|M_*| \geq \frac{(1-2\eta)}{k}|V(H_*)|$.
    Let $\varphi_*$ the completion with respect to $H_*$ of the fractional matching induced by $M_*$.
    By \cref{prop:matching_to_fractional} with $r^L$, $1$, $k$, $H$, $H_*$, $\{T_*\}$, $\varphi_*$ playing the roles of $r$, $r'$, $k$, $H$, $H_*$, $\cC_*$, $\varphi_*$ respectively, we get that $H$ contains a $1/r^L$-fractional matching $\varphi$ with weight at least $(1-2\eta)n/k$ such that all its non-zero edges are contained in a single monochromatic tight component of $H$. Since $L \leq 1/(k\gamma)$, we have 
    \begin{align*}
        1/r^L \geq 1/r^{1/(k\gamma)} \geq \beta.
    \end{align*}
    Thus we are done since $\varphi$ satisfies the conditions for both $\varphi_1$ and $\varphi_2$.
    
    Hence we may assume that $L \leq \frac{1}{\gamma} (\frac{1-2\eta}{k} - \frac{1}{k^2 15^k})$.
    Let $M_*$ be a matching in the $r^L$-blow-up $H_*$ of $H$ such that $M_*$ is contained in the monochromatic tight component $T_*$ of $H_*$ and $|M_*| = (\frac{1}{k^2 15^k} + L\gamma)|V(H_*)| \leq (1-\eta)\frac{|V(H_*)|}{k}$. Assume without loss of generality that $T_*$ is a red tight component $R_*$. Since $H$ is $(1-\eps,\eps)$-dense, by \cref{prop:H_*_is_dense}, $H_{*}$ is $(1-2\eps, 2\eps)$-dense.
    We apply \cref{lem:matching_inc_1}, with $|V(H_*)|$, $2\eps$, $\gamma$, $\eta$, $k$, $H_*$, $M_*$, $R_*$, playing the roles of $n$, $\eps$, $\gamma$, $\eta$, $k$, $H$, $M$, $R$, respectively, to obtain that at least one of the following holds. 
    \begin{enumerate}[label = \upshape{(MB\arabic*)}, leftmargin= \widthof{M100000}]
        \item There is a $1/r$-fractional matching in $R_*$ of weight at least $|M_*| + \gamma |V(H_*)|$. \label{event_red_matching}
        \item There is a matching $M'$ in a blue tight component $B_*$ of $H_*$ with 
        \[
            |M'| \geq \min(|M_*|, |V(H_*)| - |V(M_*)|) - \sqrt{\gamma} |V(H_*)|,
        \]
        such that each edge $e \in M'$ intersects exactly one edge $f(e) \in M_*$ and $|e \cap f(e)| = k-1$ and $H_*[e \cup f(e)] \cong K_{k+1}^{(k)}$. \label{event_blue_matching}
    \end{enumerate}
    Note that $|M_*| + \gamma |V(H_*)| = (\frac{1}{k^2 15^k} + (L+1)\gamma)|V(H_*)|$.
    If \ref{event_red_matching} holds, then by \cref{prop:matching_to_fractional}, we have that in the $r$-blow-up $H_{**}$ of $H_*$ there is a matching $M_{**}$ in a red tight component of $H_{**}$ with $|M_{**}| \geq (\frac{1}{k^2 15^k} + (L+1)\gamma)|V(H_{**})|$, a contradiction to the maximality of $L$, since $H_{**}$ is an $r^{L+1}$-blow-up of $H$. So \ref{event_blue_matching} holds. We distinguish the following two cases.

    \begin{enumerate}[label=\textbf{Case \Alph*:}, ref=\Alph*, wide, labelwidth=0pt, labelindent=0pt]
    \item \textbf{\boldmath $L \leq \frac{1}{\gamma}(\frac{1-2\eta}{k+1} -  \frac{1}{k^2 15^k})$.\unboldmath}
    In this case, we have that 
    \begin{align*}
        |M_*| &= \left(\frac{1}{k^2 15^k} + L\gamma\right)|V(H_*)| \leq (1-\eta)\frac{|V(H_*)|}{k+1} \leq |V(H_*)| - \frac{k}{k+1}|V(H_*)| \\ 
        &\leq |V(H_*)| - |V(M_*)|.
    \end{align*}
    So 
    \begin{align*}
        |M'| \geq \min(|M_*|, |V(H_*)| - |V(M_*)|) - \sqrt{\gamma} |V(H_*)| = |M_*| - \sqrt{\gamma} |V(H_*)|.
    \end{align*}
    By \cref{lem:matching_inc_2}, we have that there exists a $1/r$-fractional matching in $R_*$ or $B_*$ of weight at least 
    \begin{align*}
        |M'| + \delta |V(H_*)| &\geq |M_*| - \sqrt{\gamma} |V(H_*)| + \delta |V(H_*)| \geq |M_*| +\gamma |V(H_*)| \\ 
        &= \left(\frac{1}{k^2 15^k} + (L+1)\gamma\right)|V(H_*)|
    \end{align*}
    By \cref{prop:matching_to_fractional}, we have that in the $r$-blow-up $H_{**}$ of $H_*$ there is a matching $M_{**}$ in a monochromatic tight component of $H_{**}$ with 
    \begin{align*}
        |M_{**}| \geq \left(\frac{1}{k^2 15^k} + (L+1)\gamma\right)|V(H_{**})|,
    \end{align*}
    a contradiction to the maximality of $L$, since $H_{**}$ is an $r^{L+1}$-blow-up of $H$.
    \item \textbf{\boldmath $L > \frac{1}{\gamma}(\frac{1-2\eta}{k+1} -  \frac{1}{k^2 15^k})$.\unboldmath}
    Recall that $L \leq \frac{1}{\gamma}(\frac{1-2\eta}{k} -  \frac{1}{k^2 15^k})$.
    Hence, we have that 
    \begin{align} \label{size_M_*_bounds}
        (1-2\eta)\frac{|V(H_*)|}{k+1} \leq |M_*| \leq (1-\eta)\frac{|V(H_*)|}{k}.
    \end{align}
    By \cref{prop:matching_to_fractional}, we have that $\varphi_1$ exists. We now show that $\varphi_2$ exists. We define a $1/r$-fractional matching $\widetilde{\varphi}_2$ as follows. Let $M_0 = \{f(e) \colon e \in M'\}$ and
    \begin{align*}
        \widetilde{\varphi}_2(f) \coloneqq 
        \begin{cases}
            1/k &\text{ if $f \in H[e \cup f(e)]$ for some $e \in M'$,} \\
            1 &\text{ if $f \in M_* \setminus M_0$,} \\
            0 &\text{ otherwise}.
        \end{cases}
    \end{align*}
    Note that $M_* \setminus M_0 \subseteq R_*$. Moreover, note that for each $e \in M'$, $e \cup f(e)$ is a set of size $k+1$ that contains the red edge $f(e) \in R_*$ and the blue edge $e \in B_*$. Hence, for each $e \in M'$, $H[e \cup f(e)] \subseteq R_* \cup B_*$. 
    Thus $\widetilde{\varphi}_2$ gives non-zero weight only to edges in $R_* \cup B_*$.
    By \cref{event_blue_matching}, for every $e \in M'$, we have $H[e \cup f(e)] \cong K_{k+1}^{(k)}$. It follows that $\widetilde{\varphi}_2$ has weight $\frac{k+1}{k}|M'| + |M_*| - |M_0| = |M_*| + |M'|/k$.
    By \cref{size_M_*_bounds}, we have
    \begin{align*}
        \begin{split}
            |M_*| + |M'|/k &\geq |M_*| + \frac{1}{k}\min(|M_*|, |V(H_*)| - |V(M_*)|) - \sqrt{\gamma} \frac{|V(H_*)|}{k}  \\
            &= \min\left(\frac{k+1}{k} |M_*|, |V(H_*)| - (k-1) |M_*|\right) - \sqrt{\gamma} \frac{|V(H_*)|}{k}  \\
           &\geq \min\left((1-2\eta)\frac{|V(H_*)|}{k}, (1-\eta)\frac{|V(H_*)|}{k}\right) - \sqrt{\gamma} \frac{|V(H_*)|}{k} \\
           &\geq (1-3\eta) \frac{|V(H_*)|}{k}.
       \end{split}
    \end{align*}
    Thus $\widetilde{\varphi}_2$ has weight $(1-3\eta) \frac{|V(H_*)|}{k}$. Hence by \cref{prop:matching_to_fractional} with $r$, $k$, $k$, $H$, $H_*$, $\{R_*, B_*\}$, $\widetilde{\varphi}_2$ playing the roles of $r$, $r'$, $k$, $H$, $H_*$, $\cC_*$, $\varphi_*$, respectively, we have that $\varphi_2$ exists. \qedhere
    \end{enumerate}
\end{proof}

\section{Proof of the Main Theorems}

\cref{thm:k_unif_ramsey,thm:k_unif_Lehel} now follow straightforwardly from {\cite[Corollary 20]{Lo2020}} (a hypergraph version of {\L}uczak's connected matching method). To state this result, we need the following definition.

\begin{definition}[{\cite[Definition 19]{Lo2020}}]
Let $\mu_k^1(\beta,\eps, n)$ be the largest~$\mu$ such that every $2$-edge-coloured $(1-\eps, \eps)$-dense $k$-graph on~$n$ vertices contains a fractional matching with weight~$\mu$ such that all edges with non-zero weight have weight at least~$\beta$ and lie in a single monochromatic tight component.
Let $\mu_k^1(\beta) = \liminf_{\eps \to 0} \liminf_{n \to \infty} \mu_k^1(\beta,\eps,n)/n$.
Similarly, let $\mu_k^*(\beta,\eps,n)$ be the largest~$\mu$ such that every $2$-edge-coloured $(1 - \eps, \eps)$-dense $k$-graph on~$n$ vertices contains a fractional matching with weight~$\mu$ such that all edges with non-zero weight have weight at least~$\beta$ and lie in one red and one blue tight component.
Let $\mu_k^*(\beta) = \liminf_{\eps \to 0} \liminf_{n \to \infty} \mu_k^*(\beta,\eps,n)/n$. 
\end{definition}

We now state the hypergraph version of {\L}uczak's connected matching method that we use.

\begin{corollary}[{\cite[Corollary 20]{Lo2020}}] \label{cor:matching_to_cycle}
    Let $1/n \ll \eta, \beta, 1/k$ with $k \ge 3$. 
    Let~$K$ be a $2$-edge-coloured complete $k$-graph on~$n$ vertices.
    Then the following hold.
    \begin{enumerate}[label= \upshape{(\roman*)}]
        \item $K$ contains two vertex-disjoint monochromatic tight cycles of distinct colours covering at least $(\mu_k^*(\beta) - \eta)k n$ vertices and
        \item $K$ contains a monochromatic tight cycle of length~$\ell$ for any $\ell \leq (\mu_k^1(\beta) - \eta)k n$ divisible by~$k$.
    \end{enumerate}
\end{corollary}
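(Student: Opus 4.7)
The plan is to apply {\L}uczak's connected matching method in its hypergraph form. First, I would apply a strong hypergraph regularity lemma (in the spirit of Gowers or R\"odl--Schacht) to the $2$-edge-coloured complete $k$-graph $K$. This yields a partition of $V(K)$ into equisized clusters $V_1,\dots,V_t$, together with a regular underlying polyad structure, such that all but an $\eps$-fraction of $k$-tuples of clusters induce a $\delta$-regular sub-$k$-graph in which the red and blue densities are well-defined.

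Next, I would form the reduced $2$-edge-coloured $k$-graph $\R$ on vertex set $[t]$ by placing an edge on $\{i_1,\dots,i_k\}$ whenever the corresponding $k$-tuple of clusters is regular and of positive total density, colouring it by whichever of red or blue has density at least $1/2$. A standard counting argument then shows that $\R$ is $(1-\eps', \eps')$-dense for some $\eps'$ depending on the regularity parameters, and that red/blue tight components of $\R$ correspond to red/blue tight components in $K$ (at the cluster level). Using $1/t \ll \eta, \beta$ together with the definitions of $\mu_k^1(\beta)$ and $\mu_k^*(\beta)$, the reduced graph $\R$ therefore admits a fractional matching $\varphi$ with weight at least $(\mu_k^1(\beta) - \eta/2) t$ for~(ii), respectively $(\mu_k^*(\beta) - \eta/2) t$ for~(i), such that every edge of non-zero weight has weight at least $\beta$, and all such edges lie in a single monochromatic tight component $T$, respectively in one red tight component $T_R$ and one blue tight component $T_B$.

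The core step is to convert this fractional matching into long tight cycles in $K$. For each edge $e = \{i_1,\dots,i_k\}$ with $\varphi(e)>0$, I would reserve roughly $\varphi(e)|V_{i_j}|$ vertices in each cluster $V_{i_j}$ (feasible since $\sum_{e \ni i_j}\varphi(e)\le 1$) and, inside these reserved vertices, use a standard tight-path embedding lemma for regular $k$-partite $k$-graphs to build a long monochromatic tight path of the designated colour covering almost all reserved vertices. The weight lower bound $\beta$ is precisely what guarantees that no cluster is subdivided into too many pieces, so the regularity is not diluted past usefulness. Because consecutive edges in a tight component share $k-1$ vertices, one can then stitch these path pieces together via short tight-path connectors running through $T$ (respectively $T_R$ and $T_B$), obtaining a single monochromatic tight cycle for~(ii) and two vertex-disjoint monochromatic tight cycles of distinct colours for~(i).

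The main obstacle is this final conversion step, in which one must simultaneously (a)~control the cycle length modulo $k$ so that the prescribed $\ell$ can be hit exactly in~(ii) --- tight cycles in $k$-graphs exist only for lengths compatible with the cyclic-residue constraints --- and (b)~absorb the various additive losses coming from regularity error, from modular leftovers in individual clusters, and from the short connectors, while keeping the total loss below $\eta k n /2$. Both issues are handled in what is by now a standard way, via an absorbing-path-plus-reservoir argument: one first sets aside a small absorbing tight path that can swallow any leftover set of bounded size, and then uses a reservoir of flexible connecting vertices to adjust the final length in multiples of $k$. This template has been worked out for tight cycles in \cite{Haxell2009} and generalised to all uniformities in \cite{Lo2020, Lo2021}; I would follow their arguments essentially verbatim.
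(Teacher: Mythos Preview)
The paper does not prove this corollary at all; it is quoted verbatim as \cite[Corollary~20]{Lo2020} and used as a black box. So there is no ``paper's own proof'' to compare your proposal against.

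That said, your outline is broadly the right shape for how such a result is established: hypergraph regularity, a $(1-\eps',\eps')$-dense reduced $k$-graph, invoke the definition of $\mu_k^1(\beta)$ (resp.\ $\mu_k^*(\beta)$) to obtain a fractional matching in one (resp.\ two) monochromatic tight component(s), then embed long tight paths in regular $k$-partite pieces and connect them through the tight component. This is indeed the template carried out in \cite{Lo2020}. One remark: the ``absorbing-path-plus-reservoir'' machinery you invoke at the end is more than is needed here. In {\L}uczak's method one does not absorb leftover vertices; the length control in part~(ii) comes directly from the fact that a tight path through a regular $k$-partite $k$-tuple can be taken of any length divisible by~$k$ up to almost the full size, so hitting a prescribed $\ell\in k\mathbb{N}$ is achieved by trimming one path segment, not by absorption. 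The losses from regularity, connectors, and trimming are all $o(n)$ and are swallowed by the $\eta kn$ slack.
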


We are now ready to prove \cref{thm:k_unif_ramsey,thm:k_unif_Lehel}.

\begin{proof}[Proof of \cref{thm:k_unif_ramsey}]
    Let $1/n \ll \beta \ll \eta \ll \eps, 1/k < 1$ with $k \geq 3$. Let $K$ be a $2$-coloured complete $k$-graph on $N = (k+1+\eps)n$ vertices. We show that $K$ contains a monochromatic tight cycle of length $kn$. Note that by \cref{lem:frac_matchings_exist}, we have that $\mu_k^1(\beta) \geq (1-3\eta)/(k+1)$. By \cref{cor:matching_to_cycle}, we have that $K$ contains a monochromatic tight cycle of length $\ell$ for any $\ell \leq (\mu_k^1(\beta)-\eta)kN$ divisible by $k$. Since
    \begin{align*}
        (\mu_k^1(\beta)-\eta)kN \geq (1/(k+1) - 4\eta)(k+1 + \eps)kn \geq kn,
    \end{align*}
    $K$ contains a monochromatic tight cycle of length $kn$, as desired.
\end{proof}

\begin{proof}[Proof of \cref{thm:k_unif_Lehel}]
    Let $1/n \ll \beta \ll \eta \ll \eps, 1/k < 1$ with $k \geq 3$. Let $K$ be a $2$-coloured complete $k$-graph on $n$ vertices. We show that $K$ contains a red and a blue tight cycle that are vertex-disjoint and together cover at least $(1-\eps)n$ vertices. Note that by \cref{lem:frac_matchings_exist}, we have that $\mu_k^*(\beta) \geq (1-3\eta)/k$. By \cref{cor:matching_to_cycle}, we have that $K$ contains a red and a blue tight cycle that are vertex-disjoint and together cover at least $(\mu_k^*(\beta) - \eta)k n$ vertices.
    Since 
    \begin{align*}
        (\mu_k^*(\beta) - \eta)k n \geq ((1-3\eta)/k - \eta)k n \geq (1-\eps)n,
    \end{align*}
    we are done.
\end{proof}

\section{Concluding Remarks}
In this paper, we have proved the $i = 0$ case of \cref{HLPRRS_conjecture}. For the remaining cases, it is no longer sufficient to just find a large enough (fractional) matching in a monochromatic tight component of the reduced $k$-graph. Indeed, for these cases, we need some way of adjusting the parity\footnote{By \emph{parity}, we mean which residue class modulo $k$ the length of the tight cycle belongs to.} of the length of the tight cycle we obtain in the original $k$-graph. The natural approach for doing this is to find in the monochromatic tight component not just a large enough (fractional) matching, but also a gadget that allows one to change the parity of the resulting tight cycle. Such a gadget could, for example, be a short tight cycle of an appropriate length. Indeed, this approach was successfully implemented for $3$-graphs in~\cite{Haxell2009,Haxell2007}. Some of our ideas here may be useful to proving the remaining cases of \cref{HLPRRS_conjecture}, however, a lot of our arguments revolve around the fact that larger and larger matchings get us closer to our goal. This is no longer the case if $i > 0$, as for those cases, a large fractional matching in a monochromatic tight component of the reduced $k$-graph is no longer enough if we cannot also find a gadget to fix the parity.

We also proved an approximate Lehel's conjecture for $k$-uniform tight cycles. Indeed, we showed that, for $k \geq 3$, every $2$-edge-coloured complete $k$-graph contains a red and a blue tight cycle that are vertex-disjoint and together cover $n - o(n)$ vertices. Due to a counterexample (see \cite[Proposition 1]{Lo2020}), we know that the $o(n)$-term cannot be replaced by $0$.
In \cite{Garbe2024}, it was proved that in the case of $k = 3$, the $o(n)$-term can be replaced by a constant. 
We conjecture that it can also be replaced by a constant for all $k \geq 4$.
\begin{conjecture}
    For every $k \geq 3$, there exist $c_k \geq 1$ and $n_0(k) \in \mathbb{N}$ such that for all $n \geq n_0(k)$ the following holds. Every $2$-edge-coloured complete $k$-graph on $n$ vertices contains a red tight cycle and a blue tight cycle that are vertex-disjoint and together cover at least $n - c_k$ vertices.
\end{conjecture}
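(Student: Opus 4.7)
The plan is to adapt the strategy used by Garbe, Mycroft, Lang, Lo, and Sanhueza-Matamala in their proof of \cref{thm:Garbe} for $k=3$, combining a stability analysis with an absorbing argument. The starting point is \cref{thm:k_unif_Lehel}: for any $\eps > 0$ (to be chosen small in terms of a target $c_k$), there is already a red tight cycle $C_R$ and a vertex-disjoint blue tight cycle $C_B$ together covering at least $(1-\eps)n$ vertices. Writing $W \coloneqq V(K) \setminus (V(C_R) \cup V(C_B))$ for the small uncovered set, the goal is to modify $C_R$ and $C_B$ so as to absorb all but a bounded-size subset of $W$.

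The first step is an absorbing lemma. Before running the approximate argument, I would reserve a pair of disjoint tight paths $P_R, P_B$ of constant length, with $P_R$ contained in a red tight component $R$ and $P_B$ contained in a blue tight component $B$ of the kind produced by \cref{lem:frac_matchings_exist}, with the property that for every small set $S \subseteq V(K) \setminus (V(P_R) \cup V(P_B))$ of leftover vertices one can replace $P_R$ and $P_B$ by longer tight paths $P_R' \subseteq R$ and $P_B' \subseteq B$ satisfying $V(P_R') \cup V(P_B') = V(P_R) \cup V(P_B) \cup S$. The natural construction is to show first that for every $v \in V(K)$ there are polynomially many local absorber $(k+1)$-sets which contain both a red edge of $R$ and a blue edge of $B$ and support a swap inserting $v$ into either a red or a blue tight path, and then to apply the probabilistic absorbing method of R\"odl, Ruci\'nski and Szemer\'edi to pick one short tight path per colour that absorbs any sufficiently small target set simultaneously.

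The second step is to run the proof of \cref{thm:k_unif_Lehel} on $K$ minus the absorbers to obtain a red and a blue tight cycle covering all but $\eps' n$ vertices, then concatenate through the absorber to swallow $W$. To deal with the constant-sized residue, a stability argument is needed: if more than $c_k$ vertices remain unabsorbable, then the colouring must be close to the extremal lower-bound colouring described after \cref{HLPRRS_conjecture}, and for such near-extremal colourings one should be able to conclude directly, using the near-bipartite structure together with \cref{lem:blue_tight_walk} to handle a bounded number of misbehaving vertices.

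The main obstacle is the absorbing lemma itself. Tight cycles in $k$-graphs are rigid in a way that loose cycles and graph cycles are not: inserting a vertex requires matching $(k-1)$-tuples on both sides of the insertion point while staying inside the correct monochromatic tight component. It is already non-trivial to show that a single absorber structure can accommodate a vertex of \emph{either} colour preference, and harder still to build short tight paths in $R$ and in $B$ whose pooled local absorbers cover every possible leftover vertex. Furthermore, \cref{lem:blue_tight_walk} only controls the interaction of large monochromatic tight components, while absorption is sensitive to the small tight components and to the precise colouring of the link of each leftover vertex; breaking this rigidity without sacrificing the large monochromatic component guaranteed by \cref{lem:frac_matchings_exist} is, I expect, the real technical heart of the problem.
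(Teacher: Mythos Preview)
The statement you are addressing is a \emph{conjecture} in the paper, stated in the concluding remarks with no proof and explicitly left open; the paper only proves the weaker approximate version (\cref{thm:k_unif_Lehel}) with an $o(n)$ error term. There is therefore no proof in the paper to compare your proposal against.

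Your proposal is not a proof but a strategy outline, and you acknowledge this yourself by identifying the absorbing lemma as the ``main obstacle'' and explaining why it is genuinely hard. The high-level plan (reserve absorbers, run the approximate argument, absorb leftovers, handle a near-extremal case by stability) is the natural one and mirrors what \cite{Garbe2024} did for $k=3$. But as written it does not constitute a proof: the absorbing lemma is asserted rather than proved, and the stability step (``for such near-extremal colourings one should be able to conclude directly'') is a hope, not an argument. In particular, you have not specified what the absorber gadget actually is for general $k$, nor why polynomially many exist for every vertex, nor how the stability dichotomy is triggered when absorption fails; these are exactly the places where the $k=3$ argument of \cite{Garbe2024} does real work, and there is no reason to expect the $k\geq 4$ case to be a routine generalisation. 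Your discussion of the obstacle is accurate, but recognising the obstacle is not the same as overcoming it.
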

We remark that allowing the two tight cycles to possibly have the same colour might allow us to cover more vertices (as has been shown to be the case for $k = 3$ in \cite{Garbe2024}).

\section*{Acknowledgements}

The author thanks Alexandru Malekshahian for helpful discussions. The author also thanks two anonymous referees whose comments helped improve the exposition of the paper. 

This project has received partial funding from the European Research Council (ERC) under the European Union's Horizon 2020 research and innovation programme (grant agreement no.\ 786198) and was also supported in part by the Austrian Science Fund (FWF) [10.55776/I6502].

For the purpose of open access, the author has applied a CC BY public copyright licence to any Author Accepted Manuscript version arising from this submission.  

\bibliographystyle{abbrv}
\bibliography{New_bibliography}

\end{document}